\theoremstyle{plain}
\newtheorem{thm}{Theorem}
  \theoremstyle{definition}
  \newtheorem*{thm*}{Theorem}
  \newtheorem{defn}[thm]{Definition}
  \theoremstyle{remark}
  \newtheorem{rem}[thm]{Remark}
  \theoremstyle{plain}
  \newtheorem{prop}[thm]{Proposition}
  \theoremstyle{plain}
  \newtheorem{lem}[thm]{Lemma}
  \theoremstyle{plain}
  \newtheorem{cor}[thm]{Corollary}
 \theoremstyle{definition}
  \theoremstyle{remark}
  \newtheorem*{rem*}{Remark}
  \theoremstyle{definition}
\newtheorem*{question*}{\it{QUESTION}}
\theoremstyle{plain}
\newcommand{\N}{\mathbb{N}}
\newcommand{\R}{{\mathbb{R}}}
\newcommand{\C}{{\mathbb{C}}}
\newcommand{\Z}{{\mathbb{Z}}}
\newcommand{\dd}{{\rm d}}
\newcommand{\ii}{{\rm i}}
\newcommand{\ee}{{\rm e}}
\newcommand{\p}{{\rm p}}
\newcommand{\ess}{{\rm ess}}
\newcommand{\spn}{\mathop\mathrm{span}\nolimits} 
\newcommand{\Dom}{\mathop\mathrm{Dom}\nolimits}
\definecolor{DarkGreen}{rgb}{0,0.5,0.1} 
\begin{document}

\title[]{On the ground state of lattice Schrödinger operators}

\author{Michal Jex}
\address[Michal Jex]{
	Department of Physics, Faculty of Nuclear Sciences and Physical Engineering, Czech Technical University in Prague,  Břehová 7,115 19 Praha 1, Czech Republic
	}	
\email{michal.jex@fjfi.cvut.cz}

\author{Franti\v sek \v Stampach}
\address[Franti{\v s}ek {\v S}tampach]{
	Department of Mathematics, Faculty of Nuclear Sciences and Physical Engineering, Czech Technical University in Prague, Trojanova~13, 12000 Praha~2, Czech Republic
	}	
\email{stampfra@cvut.cz}

\subjclass[2020]{81Q05, 47B39, 39A12, 39A14}

\keywords{discrete Schrödinger operator, ground state, threshold eigenvalue}

\date{\today}

\begin{abstract}
We prove necessary and sufficient conditions for lattice Schrödinger
operators to have a zero energy bound state in arbitrary dimension. The two criteria are sharp, complementary, and depend crucially on both the dimension and asymptotic behaviour of the potential. 
The method relies on a discrete variant of Agmon's comparison principle which is also proven.
Our results represent a discrete variant of the recent criteria obtained in the continuous setting by D.~Hundertmark, M.~Jex, and M.~Lange  [\emph{Forum Mathematics, Sigma} \textbf{11} (2023)].
\end{abstract}

\maketitle
\section{Introduction}
The bound states serves a crucial role in the stability of quantum systems. The special importance has a ground state as a most stable state of a given system which corresponds to the lowest eigenvalue of the Hamiltonian describing the system. We consider the Schrödinger operator
$$
H_V=-\Delta+V
$$
acting in $\ell^{2}(\Z^{d})$, where 
$$ 
(-\Delta\psi)_{n}:=\!\sum_{\substack{m\in\Z^{d} \\ |m-n|=1}}\!(\psi_{n}-\psi_{m})
$$ 
is the discrete Laplacian on the lattice $\Z^{d}$ and $V$ is a real valued potential such that $H_V$ is a self-adjoint operator on its maximal domain (more details given in Subsec.~\ref{subsec:disc_schro} below).

In this paper we focus on the situation when the ground state eigenvalue approaches the threshold of the essential spectrum. In particular, we are interested in the situation when the ground state eigenvalue is precisely at the threshold, thus becoming embedded in the essential spectrum. In general, the problem of embedded eigenvalues is of great interest. However, most of the results are focused on the eigenvalues embedded in the interior of the essential spectrum. It is known that the potential needs to satisfy specific behaviour to allow the existence of embedded eigenvalues,see e.g. \cite{vonneu93,dam05,den07,kru12,liu21,rem07}.

The conditions for the existence and absence of threshold states were studied in the continuous setting for a long time due to its importance to the time-decay of solutions of the time-dependent Schrödinger equation \cite{JeKa79,Yaf83}. Typical approach to this problem is to study the behaviour of the operator resolvent. This is much more complicated than the approach used recently in \cite{hjl23}, where the conditions were derived from subharmonic estimates on the eigenfunctions. 

The threshold states in the continuous settings were also studied in atomic systems for hydrogen anions with scaled Coulomb repulsion \cite{HofOstHofOstSim83,Hof84}, where authors shown the existence of threshold states for singlet states and absence for triplet states. This was generalized for general repulsive Coulomb interaction in \cite{bol85}, where it is shown that the long range repulsion stabilizes the threshold states. The general condition on the absence result for spherically symmetric potentials satisfying $V(x)<\frac{3}{4|x|^2}+\frac{1}{|x|^2\log|x|}$ in dimension 3 was proved in \cite{BenYar90}. The existence condition on the critical potential was derived in the form $V(x)\geq\frac{3+\epsilon}{4|x|^2}$ in \cite{GriGar07} using careful resolvent estimates. The most general condition for arbitrary dimension was done in \cite{hjl23} and its leading order terms can be written as
\begin{equation*}
\begin{split}
V(x)&\leq \frac{d(4-d)}{4|x|^{2}}+\frac{1}{|x|^{2}\log|x|}\quad\textrm{for the absence},\\
V(x)&\geq\frac{d(4-d)}{4|x|^{2}}+\frac{1+\varepsilon}{|x|^{2}\log|x|}\quad\textrm{for the existence},
\end{split}
\end{equation*}
where $d\in\N$ denotes the dimension, $\epsilon>0$, and $x$ is sufficiently large. It turns out that the properties of the zero energy eigenstates are dictated by a long range behaviour of the system also in the discrete case. 

Before we state our main results, we need to introduce a notation. The iterated logarithm $\log_{k}|n|$ is defined inductively as $\log_{0} x:=x$ for $x>0$ and $\log_{k+1}x:=\log(\log_{k}x)$ for $k\in\N_{0}$ and $x$ greater than the $k$-th tetration of $\ee$. By $\sigma(H)$, $\sigma_{\p}(H)$, and $\sigma_{\ess}(H)$ we denote the spectrum, the point spectrum, and the essential spectrum of a self-adjoint operator $H$, respectively. Our first main result is the following \textbf{absence} condition.

\begin{thm}
If there exists $s\in\N_{0}$ such that the potential $V$ of the discrete Schrödinger operator $H_{V}$ on $\Z^{d}$ fulfils
\begin{equation}\label{eq:absence}
V_{n}\leq \frac{d(4-d)}{4|n|^{2}}+\frac{1}{|n|^{2}}\sum_{j=1}^{s}\prod_{k=1}^{j}\frac{1}{\log_{k}|n|}
\end{equation}
for all $n\in\Z^{d}$ with $|n|$ sufficiently large, then $H_{V}$ does not have a zero energy ground state, i.e. $0\notin\sigma_{\p}(H_V)$ or $0\neq\inf\sigma(H_V)$.
\end{thm}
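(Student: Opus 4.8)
The plan is to argue by contradiction with the help of the discrete Agmon comparison principle and an explicit non-$\ell^2$ comparison function that is a strict subsolution at the critical coupling. Suppose $0=\inf\sigma(H_V)$ and $0\in\sigma_{\p}(H_V)$, and let $\psi$ be an associated ground state. Because $-\Delta$ has nonpositive off-diagonal entries and $\Z^{d}$ is connected, a Perron--Frobenius argument shows that the ground state may be chosen strictly positive, $\psi_n>0$ for every $n$. Let $U_n$ denote the right-hand side of \eqref{eq:absence} with $s$ replaced by $s':=\max\{s,1\}$; since the extra summands are positive, the hypothesis yields $V_n\le U_n$ for all large $|n|$. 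Using $-\Delta\psi=-V\psi$ together with $\psi_n>0$ we get $(H_U\psi)_n=(U_n-V_n)\psi_n\ge0$, so $\psi$ is a positive supersolution of $H_U$ in the exterior region $\Omega_R=\{|n|>R\}$.

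Next I would construct the comparison function. Guided by the continuous borderline solution $r^{-d/2}$ of the critical Hardy operator $-\Delta+\tfrac{d(4-d)}{4}r^{-2}$ and the iterated-logarithm hierarchy, set
\[
\phi_n:=|n|^{-d/2}\prod_{k=1}^{s'}(\log_k|n|)^{-1/2}.
\]
This $\phi$ fails to be square summable: comparing with an integral,
\[
\sum_{|n|>R}\phi_n^{2}\asymp\int_R^\infty\frac{\dd r}{r\prod_{k=1}^{s'}\log_k r}=\log_{s'+1}r\,\Big|_R^\infty=\infty .
\]
The crux is to show that $\phi$ is a strict subsolution of $H_U$ for large $|n|$, i.e.\ $(H_U\phi)_n<0$.

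For this I would expand $(-\Delta\phi)_n$ asymptotically. Since $\phi$ is radial and slowly varying, a Taylor expansion of $\phi_{n\pm e_j}$ reproduces the continuous radial Laplacian to leading order, the genuinely discrete corrections appearing only at order $|n|^{-d/2-4}$ times logarithmic factors and hence below the relevant scale. Passing to $t=\log|n|$ and writing $\phi=|n|^{-d/2}v$, the critical Hardy term cancels exactly and the zero-energy equation reduces to $\ddot v-2\dot v-Wv=0$ with $W=\sum_{j=1}^{s'}\prod_{k=1}^{j}(\log_k|n|)^{-1}$; the chosen $v=\prod_{k=1}^{s'}(\log_k|n|)^{-1/2}$ satisfies $2\dot v+Wv=0$ identically, so the residual comes solely from the neglected $\ddot v$ and equals $(H_U\phi)_n\sim-\tfrac34\,|n|^{-d/2-2}(\log|n|)^{-2}\,v_n<0$. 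The hard part is to carry this out rigorously: one must differentiate the iterated logarithms carefully to confirm both the exact cancellation of all retained orders and the definite negative sign $-\tfrac34$ of the leading residual, and then bound the purely discrete corrections uniformly for $|n|>R$ so that they cannot overturn this sign.

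It then remains to invoke the discrete Agmon comparison principle on $\Omega_R$. There $\psi$ is a positive supersolution and $\phi$ a positive subsolution of $H_U$; as $\psi$ is strictly positive on the finite layer $\{|n|=R\}$ we may fix $c>0$ with $\psi_n\ge c\phi_n$ there, and since $\psi\in\ell^2$ tends to zero at infinity the comparison principle propagates the inequality to $\psi_n\ge c\phi_n$ for all $n\in\Omega_R$. Combined with the previous display this forces $\sum_n\psi_n^2\ge c^2\sum_{|n|>R}\phi_n^2=\infty$, contradicting $\psi\in\ell^2$. Hence $H_V$ has no zero-energy ground state.
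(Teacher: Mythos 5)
Your overall architecture coincides with the paper's: assume a strictly positive zero-energy ground state $\psi$ exists, compare it with the non-square-summable function $\phi_n=|n|^{-d/2}\prod_{k=1}^{s'}(\log_k|n|)^{-1/2}$ (the paper's $b^{s'}$ from \eqref{eq:def_b_s}, with your $s'=\max\{s,1\}$ matching the paper's reduction of the $s=0$ case) via the discrete Agmon comparison principle (Theorem~\ref{thm:comp}), and conclude $\phi_n\leq C\psi_n$ on an exterior region, hence $\phi\in\ell^{2}$, a contradiction; whether one phrases the comparison relative to your auxiliary operator $H_U$ (with $\psi$ a supersolution of $H_U$ because $V\leq U$) or relative to $H_V$ itself (the paper's choice, with $b^{s}$ a subsolution of $H_V$ because $V_n\leq(\Delta b^{s})_n/b^{s}_n$) is immaterial. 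The genuine gap is at what you yourself call the crux: the claim that $\phi$ is a subsolution of $H_U$ for all large $|n|$, equivalently $(\Delta b^{s'})_n/b^{s'}_n\geq U_n$, which is exactly the paper's inequality \eqref{eq:b_s_ineq}. You support it only by a formal passage to the continuum (Taylor expansion plus the ODE reduction in $t=\log|n|$) and then explicitly defer the rigorous verification (``the hard part is to carry this out rigorously\dots''). That verification is the technical heart of the whole result: the paper devotes Subsection~\ref{subsec:expan_ineq} to proving the expansion \eqref{eq:b_s_expan}, whose residual $\tfrac{3}{4|n|^{2}\log^{2}|n|}$ plus lower-order terms is what makes \eqref{eq:b_s_ineq} true, by a careful induction on the iterated-logarithm ratios. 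Your heuristic does predict the correct residual and the correct sign, and your plan for the lattice corrections (fourth-order Taylor errors of size $|n|^{-d/2-4}$, negligible against the residual scale $|n|^{-d/2-2}\log^{-2}|n|$) looks viable, but as written the proposal is an outline whose decisive lemma is asserted rather than proved.

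A secondary flaw: in the comparison step you justify propagating $\psi_n\geq c\phi_n$ from the boundary layer by ``since $\psi\in\ell^{2}$ tends to zero at infinity.'' That is not the hypothesis of Theorem~\ref{thm:comp}, and by itself it would not justify the step: the potential $U$ has no definite sign (for $d>4$ its leading term is negative), so no naive maximum principle based on decay at infinity applies. What the discrete Agmon principle actually requires is the summability condition \eqref{eq:agmon_assum} imposed on the \emph{subsolution} $u=\phi$, which holds here because $\phi_n=O(|n|^{-d/2})$ and $d/2>(d-2)/2$ (Remark~\ref{rem:subsol_decay}); no condition on the supersolution $\psi$ beyond strict positivity is needed. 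This is a one-line fix, but as cited the propagation step is unjustified.
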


To give the complementary existence condition, we require one additional condition on the potential. We say that an operator is critical at 0 whenever an arbitrary compact negative perturbation of the operator creates discrete (negative) eigenvalues below the threshold of the essential spectrum; see Definition~\ref{def:crit} below for the exact definition. The \textbf{existence} condition formulated for critical operators in the next theorem is our second main result.

\begin{thm}
Let $\inf\sigma(H_{V})=\inf\sigma_{\ess}(H_{V})=0$ and $H_{V}$ be critical at $0$. If there exists $s\in\N_{0}$ and $\varepsilon>0$ such that 
\begin{equation}\label{eq:existence}
V_{n}\geq\frac{d(4-d)}{4|n|^{2}}+\frac{1}{|n|^{2}}\sum_{j=1}^{s}\prod_{k=1}^{j}\frac{1}{\log_{k}|n|}+\frac{\varepsilon}{|n|^{2}}\prod_{k=1}^{s}\frac{1}{\log_{k}|n|}
\end{equation}
for all $n\in\Z^{d}$ with $|n|$ sufficiently large, then $0\in\sigma_{\p}(H_{V})$.
\end{thm}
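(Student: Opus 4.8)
The plan is to manufacture a positive zero-energy solution from criticality and then to trap it inside an explicit $\ell^{2}$ comparison function by means of the discrete Agmon comparison principle. First I would fix a nonnegative, finitely supported weight $W$, say $W_{n}=\mathbf{1}_{\{|n|\le R_{0}\}}$, and consider the family $H_{V}-\lambda W$ for small $\lambda>0$. Criticality at $0$ guarantees that $\inf\sigma(H_{V}-\lambda W)=:E(\lambda)<0$ is a genuine discrete eigenvalue lying below $\inf\sigma_{\ess}(H_{V})=0$; let $\psi_{\lambda}>0$ be the associated ground state, positive by the Perron--Frobenius property of $-\Delta$ on the connected lattice and normalized by $\psi_{\lambda}(n_{\star})=1$ at a fixed site $n_{\star}$. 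As $\lambda\downarrow0$ one has $E(\lambda)\uparrow0$, and on the exterior $\{|n|>R_{0}\}$ the function $\psi_{\lambda}$ satisfies $(-\Delta+V)\psi_{\lambda}=E(\lambda)\psi_{\lambda}\le0$, i.e.\ it is a positive subsolution of $H_{V}$ there.

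The heart of the argument is the construction of a single $\ell^{2}$ supersolution dominating the whole family. Guided by the radial equation $-u''-\tfrac{d-1}{r}u'+\tfrac{d(4-d)}{4r^{2}}u=0$, whose decaying indicial root is $-d/2$, I would set
\[
\phi_{n}:=|n|^{-d/2}\prod_{k=1}^{s}\big(\log_{k}|n|\big)^{-1/2}\,\big(\log_{s+1}|n|\big)^{-\beta},\qquad \beta\in\big(\tfrac12,1\big).
\]
Two facts must be checked. Summability follows from
\[
\sum_{n}\phi_{n}^{2}\;\asymp\;\int^{\infty}\frac{\dd r}{r\,\log r\cdots\log_{s}r\,(\log_{s+1}r)^{2\beta}},
\]
which converges precisely because $2\beta>1$, so $\phi\in\ell^{2}(\Z^{d})$. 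For the supersolution property I would write $\phi=u^{*}g$ with $u^{*}=|n|^{-d/2}\prod_{k=1}^{s}(\log_{k}|n|)^{-1/2}$ and $g=(\log_{s+1}|n|)^{-\beta}$: the borderline profile $u^{*}$ nearly annihilates the critical operator with potential equal to the right-hand side of \eqref{eq:absence}, and a Leibniz-type expansion leaves the remainder $u^{*}(-\Delta g)-2\nabla u^{*}\!\cdot\!\nabla g$ of order $|n|^{-2}\big(\prod_{k=1}^{s}\log_{k}|n|\big)^{-1}(\log_{s+1}|n|)^{-1}\phi_{n}$. This is dominated by the surplus term $\varepsilon\,|n|^{-2}\big(\prod_{k=1}^{s}\log_{k}|n|\big)^{-1}\phi_{n}$ supplied by hypothesis \eqref{eq:existence}, because the latter is larger by the diverging factor $\log_{s+1}|n|$. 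Hence $(-\Delta\phi)_{n}+V_{n}\phi_{n}\ge0$ for all large $|n|$ and every fixed $\beta$; this is exactly where the margin $\varepsilon>0$ enters, and it explains why an arbitrarily small $\varepsilon$ suffices.

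With both ingredients in hand I would invoke the discrete comparison principle on $\{|n|>R_{0}\}$. Since $\phi>0$ is a supersolution and $\psi_{\lambda}>0$ a subsolution of $H_{V}$ there, and since a discrete Harnack inequality keeps the normalized $\psi_{\lambda}$ uniformly bounded on the fixed boundary layer $\{R_{0}<|n|\le R_{0}+1\}$, the comparison principle yields $\psi_{\lambda}(n)\le C\phi_{n}$ for all $|n|>R_{0}$ with $C$ independent of $\lambda$. Combined with the interior bound this dominates the whole family by a fixed $\ell^{2}$ function; passing to a weak limit $\psi_{\lambda}\rightharpoonup\psi$, the uniform domination provides tightness, so no mass escapes to infinity, whence $\psi\neq0$, $\psi\in\ell^{2}$, and $H_{V}\psi=0$. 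Therefore $0\in\sigma_{\p}(H_{V})$.

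I expect the supersolution estimate in the discrete setting to be the main obstacle. Unlike the continuous heuristic above, $(-\Delta\phi)_{n}=\sum_{|m-n|=1}(\phi_{n}-\phi_{m})$ must be expanded by discrete Taylor expansion, producing the continuous Laplacian as leading term together with fourth-difference corrections of order $|n|^{-4}\phi_{n}$; one has to show, with uniform remainder control over the entire sphere and through all $s+1$ nested logarithms, that these corrections and the cross terms are genuinely of lower order than the decisive term $\varepsilon\,|n|^{-2}\big(\prod_{k=1}^{s}\log_{k}|n|\big)^{-1}\phi_{n}$. The bookkeeping of the iterated logarithms, and the verification that the same $\phi$ works uniformly in the spectral parameter $\lambda$, are the delicate points, while the Harnack-based normalization of $\psi_{\lambda}$ on the boundary layer is a secondary technical step.
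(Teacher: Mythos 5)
Your proposal is correct, and its skeleton is the paper's: use criticality to produce a family of negative\mbox{-}energy ground states of compactly perturbed operators, dominate them uniformly by an explicit $\ell^{2}$ supersolution via the discrete Agmon comparison principle, and pass to the limit. The differences are in the implementation, and each is legitimate. First, you perturb by $\lambda\mathbf{1}_{\{|n|\le R_{0}\}}$ while the paper uses $\frac{1}{k}\delta_{0}$; this is immaterial. Second, your comparison function carries the extra factor $(\log_{s+1}|n|)^{-\beta}$, $\beta>\tfrac12$, one logarithm deeper than the paper's $b^{s}(\varepsilon)$ from~\eqref{eq:def_b_s_eps}, which instead uses $(\log_{s}|n|)^{-\varepsilon/4}$. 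Both are in $\ell^{2}$ and both are supersolutions for large $|n|$ under~\eqref{eq:existence}; your choice has the pleasant feature that the surplus it consumes, of order $|n|^{-2}\prod_{k=1}^{s+1}(\log_{k}|n|)^{-1}$, is $o(\cdot)$ of the surplus supplied by the hypothesis, so no coefficient matching is needed (the paper must exploit that the coefficient $\varepsilon/2$ in expansion~\eqref{eq:b_s_eps_expan} is strictly below $\varepsilon$), at the price of redoing the discrete two-term expansion with one more nested logarithm --- the induction in the paper's Step~1 already covers that level, so this goes through. Third, and most substantively, you normalize $\psi_{\lambda}(n_{\star})=1$ pointwise and therefore need a discrete Harnack/chaining bound to make the Agmon constant uniform in $\lambda$; the paper instead normalizes $\|\phi^{k}\|_{2}=1$, so that $0\le\phi^{k}_{n}\le1$ yields the uniform constant for free, and then upgrades weak to strong convergence by dominated convergence. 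Your chaining lemma is not in the paper but is easy here: at any site $n$, positivity of $\psi_{\lambda}$ and the eigenvalue equation give $\psi_{\lambda}(m)\le\bigl(2d+V_{n}-\lambda W_{n}-E(\lambda)\bigr)\psi_{\lambda}(n)$ for every neighbour $m$, and one iterates along lattice paths inside the fixed finite region; in exchange, non-vanishing of your weak limit is trivial since $\psi(n_{\star})=1$. Two small points you should still record: any $\ell^{2}$ subsolution satisfies the summability hypothesis~\eqref{eq:agmon_assum} of the comparison theorem (Cauchy--Schwarz over the annulus gives a bound $O(M^{-2})$), and once $(H_{V}\psi)_{n}=0$ holds pointwise in the limit, the operator identity follows because $V\psi=\Delta\psi\in\ell^{2}(\Z^{d})$, so $\psi\in\Dom V$.
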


An inspection of the expressions in~\eqref{eq:absence} and \eqref{eq:existence} shows that the conditions are sharp due to the fact that $s$ can be chosen arbitrary large and $\epsilon>0$ arbitrary small. It is worth noting that the existence condition in principle works also for higher eigenvalues at the critical coupling. It is important that even though the potential estimates \eqref{eq:absence} and \eqref{eq:existence} are discrete analogies to the ones in the continuous setting \cite{hjl23}, their proofs are more involved. In fact, conditions~\eqref{eq:absence} and~\eqref{eq:existence} follow from more general Theorems~\ref{thm:absence_cond} and~\ref{thm:existence_cond} as by no means obvious  particular cases. Since the discrete Laplacian is a bounded operator, $H_V$ can have both the lower as well as the upper threshold of the essential spectrum. We prove analogous conditions to those in \eqref{eq:absence} and \eqref{eq:existence} for upper threshold eigenstates in Theorems \ref{thm:absence_cond_right} and \ref{thm:existence_cond_right} below.

From the expressions \eqref{eq:absence} and \eqref{eq:existence} we see that the critical decay of the potential is of order of order $1/|n|^{2}$ for $|n|$ large. The inverse square decay is present also in other situations as a borderline case. One is Hardy potentials, where the inverse square decay exhibit asymptotically optimal Hardy weights on $\Z^{d}$ for $d\geq3$ or $d=1$ with the Dirichlet condition at the origin. The case $d=2$ is special and reminiscent to the case $d=4$ in~\eqref{eq:absence} and~\eqref{eq:existence} for an extra logarithmic term is present in the respective Hardy weight; see~\cite{kap-lap_16} for details. Optimal discrete Hardy weights, which can be viewed as potentials $V$ which, when subtracted from the Laplacian, the corresponding Schrödinger operator becomes critical, are not completely analogical to their well-known continuous counterparts. Nevertheless, they still exhibit the inverse square decay for $d=1$ with the Dirichlet condition at the origin and $d\geq3$, see~\cite[Thms.~7.2 and~7.3]{kel-pin-pog_cmp18}.

Another example of the borderline case concerns the discrete spectrum of discrete Schrö\-din\-ger operators that is known to be finite if $V_{n}\gtrsim-|n|^{-2-\epsilon}$ for $\epsilon>0$; see \cite{naiman1959set,damanik2003bound,luef2004finiteness} for more details. Let us remark that these results, which are well-known in the continuous case, seem harder to prove in the discrete setting.

In the continuous setting the spectral phase transition was observed for $d=4$, see \cite{hjl23}. It manifests in the different behaviour of virtual levels for small dimensions and large dimensions, e.g., absence of zero energy resonances for short range potentials in dimension $d>4$, see \cite{JeKa79,Jen80,Jen84}. Our results reveal the same behaviour in the discrete setting. It follows from the dependence of the sign of the leading term in \eqref{eq:absence} and \eqref{eq:existence} on the dimension $d$. The short range potentials satisfy condition \eqref{eq:existence} for $d>4$ because the leading order term becomes negative.
 
Proofs of our main results rely on subharmonic comparison estimates in the spirit of Agmon deduced in Theorem \ref{thm:comp} below. Agmon's comparison theorem~\cite[Thm~2.7]{agm_85} proved itself to be a very useful tool in the continuous setting. We were not able to find its discrete variant in the literature and we are convinced that it is a result of independent interest in analysis of properties of eigenvectors in the discrete setting, too. Using the standard definition of subsolutions and supersolutions given precisely in Definition \ref{def:sub_sup} below, the discrete Agmon's comparison principle proves existence of a constant $C>0$ such that
$$
 u_{n}\leq Cw_{n}
$$
for all $n$ in a set, where $w_n$ is a strictly positive supersolution of the Schrödinger equation and $u_n$ a subsolution of the same Schrödinger equation satisfying an additional mild summability condition, see~\eqref{eq:agmon_assum} below. The summability condition is fulfilled when $u_n$ decays sufficiently fast as $|n|\to\infty$, see Remark~\ref{rem:subsol_decay} below. In our proofs of the absence and existence conditions, one of the functions $u$ and $w$ is always an eigenvector of a Schrödinger operator while the second is chosen as a suitable comparison function.

\subsection{Organization of the paper}

Preliminary results are deduces in four subsections of Section~\ref{sec:prelim}. Basic definitions of lattice Schrödinger operators are recalled in Subsection~\ref{subsec:disc_schro}. General ergodic theory is used to demonstrate simplicity and positivity of the ground state of a lattice Schrödinger operator in Subsection~\ref{subsec:sim_pos}. The discrete variant of Agmon's comparison principle is proven in Subsection~\ref{subsec:agmon}. Two essential inequalities are derived in Subsection~\ref{subsec:expan_ineq}.

Main results are proven in Section~\ref{sec:main}.  Conditions for the absence of the zero ground state energy are deduced in Subsection~\ref{subsec:absence}, while complementary conditions guaranteeing the existence of the ground state are given in Subsection~\ref{subsec:existence}. Similar conditions for the opposite edge point of the essential spectrum are also formulated in Subsections~\ref{subsec:absence} and~\ref{subsec:existence}. Finally, an example of a lattice Schrödinger operator demonstrating a transition between the existence and non-existence of the ground state is presented in last Subsection~\ref{subsec:example}.

\subsection{Notation}
For readers convenience, we summarize a notation used throughout this paper. As it is common, $\Z$ denotes the set of integers; $\N_{0}$ and $\N$ are the sets of non-negative and positive integers, respectively.
\begin{itemize}
\item The Euclidean norm of $n\in\Z^{d}$ is denoted by 
\[
|n|:=\sqrt{\sum_{j=1}^{d}|n_{j}|^{2}}
\]
\item The Hilbert space of square-summable functions on $\Z^{d}$ is denoted by 
\[
\ell^{2}(\Z^{d}):=\{\psi:\Z^{d}\to\C \mid \|\psi\|<\infty\},
\]
where $\|\cdot\|$ is the $\ell^{2}$-norm induced by the inner product
\[
 \langle\phi,\psi\rangle:=\sum_{n\in\Z^{d}}\overline{\phi_{n}}\psi_{n}.
\]
We use the notation $\psi_{n}:=\psi(n)$ for values of a function $\psi:\Z^{d}\to\C$.
\item For $\Omega\subset\Z^{d}$, $C_{c}(\Omega)$ denotes the space of functions $\psi:\Z^{d}\to\C$ compactly supported in $\Omega$.
\item $e_{n}$ for $n\in\Z^{d}$ and $\delta_{j}$ for $j=1,\dots,d$ denote vectors of standard bases of spaces $\ell^{2}(\Z^{d})$ and $\C^{d}$, respectively.
\item The positive part of a function $\psi:\Z^{d}\to\C$ is denoted by $\psi_{+}:=\max(0,\psi)$.
\item By $\sigma(H)$, $\sigma_{\p}(H)$, and $\sigma_{\ess}(H)$ we denote the spectrum, the point spectrum, and the essential spectrum of a self-adjoint operator $H$, respectively.
\item For $N>0$, we define
\[
 \Z_{\geq N}:=\{n\in\Z^{d} \mid |n|\geq N\}.
\]
\end{itemize}

\section{Preliminaries}\label{sec:prelim}

\subsection{Discrete Schrödinger operators}\label{subsec:disc_schro}
Recall that the \emph{discrete Laplacian} on the $d$-dimensional lattice $\Z^{d}$ is defined by the formula
\[
 (-\Delta\psi)_{n}:=\!\sum_{\substack{m\in\Z^{d} \\ |m-n|=1}}\!(\psi_{n}-\psi_{m})=2d\psi_{n}-\!\sum_{\substack{m\in\Z^{d} \\ |m-n|=1}}\psi_{m}, \quad n\in\Z^{d},
\]
for any $\psi:\Z^{d}\to\C$. When regarded as an operator on $\ell^{2}(\Z^{d})$, $-\Delta$ is bounded, self-adjoint, and diagonalized by the $d$-dimensional discrete Fourier transform
\[
 F:\ell^{2}(\Z^{d})\to L^{2}([-\pi,\pi]^{d}):\psi\mapsto\frac{1}{(2\pi)^{d/2}}\sum_{n\in\Z^{d}}e^{-\ii n\cdot\xi}\,\psi_{n}.
\]
Concretely, for any $f\in L^{2}([-\pi,\pi]^{d})$, one has the dispersion relation 
\[
 F(-\Delta)F^{-1}f(\xi)=h(\xi)f(\xi),
\]
where
\[
 h(\xi):=2\sum_{j=1}^{d}(1-\cos\xi_{j}).
\]
Consequently, the spectrum of $-\Delta$ is absolutely continuous and $\sigma(-\Delta)=[0,4d]$.

Given $V:\Z^{d}\to\R$, we define an operator by the multiplication of $V$, denoted again by the letter  $V$ with some abuse of {the} notation, by the formula 
\[
 (V\psi)_{n}:=V_{n}\psi_{n}, \quad n\in\Z^{d},
\]
on the maximal domain
\[
 \Dom V:=\left\{\psi\in\ell^{2}(\Z^{d}) \mid V\psi\in\ell^{2}(\Z^{d})\right\}.
\]
As it is standard, both the function and the operator $V$ are referred to as the \emph{potential}.
Then the \emph{discrete Schrödinger operator} on the lattice $\Z^{d}$ is the operator sum $H_{V}:=-\Delta+V$.
Since $-\Delta$ is bounded $\Dom H_{V} = \Dom V$ and $H_{V}$ is self-adjoint. 

\subsection{Simplicity and positivity of the ground state of $H_{V}$}\label{subsec:sim_pos}

It is well known that under certain assumptions, if a continuous Schrödinger operator has an eigenvalue as the lowest spectral point, this eigenvalue is simple and the corresponding eigenvector can be chosen strictly positive, see~\cite[Sec.~XIII.12]{reed-simon-vol4}. We will use the theory of~\cite[Sec.~XIII.12]{reed-simon-vol4} to show that the same holds true in the discrete setting, too. 

First, we recall a~terminology. A vector $\psi\in\ell^{2}(\Z^{d})$ is called \emph{positive} if $\psi\neq0$ and $\psi_{n}\geq0$ for all $n\in\Z^{d}$. A vector $\psi\in\ell^{2}(\Z^{d})$ is called \emph{strictly positive} if $\psi_{n}>0$ for all $n\in\Z^{d}$. A~bounded operator $A$ acting on $\ell^{2}(\Z^{d})$ is called \emph{positivity preserving} or \emph{positivity improving} if $A\psi$ is positive whenever $\psi$ is positive or $A\psi$ is strictly positive whenever $\psi$ is positive, respectively. 
Lastly, a~bounded operator $A$ is called \emph{ergodic} if $A$ is positivity preserving and for any $\phi,\psi\in\ell^{2}(\Z^{d})$ both positive there exists $k\in\N$ for which $\langle \phi,A^{k}\psi\rangle\neq0$.

A combination of results from~\cite[Sec.~XIII.12]{reed-simon-vol4} particularly yields the following useful proposition.

\begin{prop}\label{prop:reed-simon}
Let $H$ and $H_{0}$ be self-adjoint operators bounded from below on a Hilbert space and $E:=\inf\sigma(H)$ be an eigenvalue of $H$. Suppose that there exists a sequence of operators  of multiplication by bounded functions $W_{N}$ so that $H_{0}+W_{N}\to H$ and $H-W_{N}\to H_{0}$ in the strong resolvent sense as $N\to\infty$. Suppose, in addition, that $H_{0}+W_{N}$ and $H-W_{N}$ are uniformly bounded from below. If the semigroup $\exp(-tH_{0})$ is positivity improving for all $t>0$, then $E$ is a simple eigenvalue of~$H$ and the corresponding eigenvector can be chosen strictly positive.
\end{prop}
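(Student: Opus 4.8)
The plan is to reduce the statement to a Perron--Frobenius--type theorem for positivity improving semigroups and then to verify that $\exp(-tH)$ is positivity improving by a two-stage approximation built on the two convergence hypotheses. Concretely, the Perron--Frobenius theorem from \cite[Sec.~XIII.12]{reed-simon-vol4} asserts that if $\exp(-tH)$ is positivity improving for all $t>0$ and $E=\inf\sigma(H)$ happens to be an eigenvalue, then $E$ is simple and its eigenvector can be chosen strictly positive. Thus the whole proposition follows once we show that the semigroup generated by $H$ is positivity improving, and this is exactly the point at which the approximating operators $W_{N}$ are used.

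First I would show that $\exp(-tH)$ is positivity preserving. Since each $W_{N}$ is a bounded multiplication operator, $\exp(-tW_{N})$ is multiplication by the strictly positive bounded function $\exp(-tW_{N}(\cdot))$ and hence positivity preserving; combined with the positivity preserving (indeed improving) semigroup $\exp(-tH_{0})$ via the Trotter product formula $\exp(-t(H_{0}+W_{N}))=\mathrm{s}\text{-}\lim_{k}\bigl(\exp(-\tfrac{t}{k}H_{0})\exp(-\tfrac{t}{k}W_{N})\bigr)^{k}$, each finite product is positivity preserving, and therefore so is $\exp(-t(H_{0}+W_{N}))$. The hypothesis $H_{0}+W_{N}\to H$ in the strong resolvent sense together with the uniform lower bound forces $\exp(-t(H_{0}+W_{N}))\to\exp(-tH)$ strongly (the map $\lambda\mapsto\exp(-t\lambda)$ being bounded and continuous on the common half-line $[-M,\infty)$), and a strong limit of positivity preserving operators is positivity preserving. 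Hence $\exp(-tH)$ is positivity preserving; running the identical argument on the sequence $H-W_{N}\to H_{0}$ shows that each $\exp(-t(H-W_{N}))$ is positivity preserving as well.

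The second, and main, step is to upgrade positivity preserving to positivity improving, and here the approximation $H-W_{N}\to H_{0}$ does the work. By the same strong-resolvent mechanism $\exp(-t(H-W_{N}))\to\exp(-tH_{0})$ strongly, and $\exp(-tH_{0})$ is positivity improving by assumption, so for any two positive vectors $\phi,\psi$ one has $\langle\phi,\exp(-t(H-W_{N}))\psi\rangle>0$ for all $N$ large enough. Writing $H=(H-W_{N})+W_{N}$ with $W_{N}$ bounded and both semigroups positivity preserving, a Duhamel comparison (integrating $\tfrac{\dd}{\dd s}\,\exp(-(t-s)(H-W_{N}+\|W_{N}\|_{\infty}))\exp(-sH)$) yields $\langle\phi,\bigl(\exp(-tH)-\exp(-t\|W_{N}\|_{\infty})\exp(-t(H-W_{N}))\bigr)\psi\rangle\geq0$ for all positive $\phi,\psi$. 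Combining the two facts at a single fixed large $N$ gives $\langle\phi,\exp(-tH)\psi\rangle\geq\exp(-t\|W_{N}\|_{\infty})\langle\phi,\exp(-t(H-W_{N}))\psi\rangle>0$, so $\exp(-tH)$ is positivity improving; fixing $N$ once it is large enough sidesteps any difficulty stemming from $\|W_{N}\|_{\infty}$ possibly diverging as $N\to\infty$.

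I expect the delicate point to be precisely this domination inequality: one must justify the Duhamel comparison for the generally unbounded operator $H-W_{N}$, and one must recognise that the two one-sided approximations, each carrying its own uniform lower bound, play genuinely distinct roles --- the first forces $\exp(-tH)$ to be positivity preserving, while the second supplies a strictly positive lower bound inherited from the free semigroup $\exp(-tH_{0})$. Once $\exp(-tH)$ is known to be positivity improving, the Perron--Frobenius theorem of \cite[Sec.~XIII.12]{reed-simon-vol4}, applied to the eigenvalue $E=\inf\sigma(H)$, closes the argument.
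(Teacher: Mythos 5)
Your proposal is correct, but it takes a genuinely different route from the paper's. The paper never establishes that $\exp(-tH)$ is positivity improving: its pivot is \emph{irreducibility}. It extracts from the proof of the implication (b)$\Rightarrow$(c) in \cite[Thm.~XIII.43]{reed-simon-vol4} that the bounded multiplication operators together with $\exp(-tH_{0})$ act irreducibly, then invokes \cite[Thm.~XIII.45]{reed-simon-vol4} as a black box --- this is precisely where both strong-resolvent hypotheses and the uniform lower bounds are consumed --- to transfer irreducibility and positivity preservation to $\exp(-tH)$, and finally gets ergodicity and the conclusion from \cite[Thms.~XIII.43 and~XIII.44]{reed-simon-vol4}. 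You bypass irreducibility altogether and verify by hand the stronger property that $\exp(-tH)$ is positivity improving: first positivity preservation via Trotter products and strong-resolvent limits (the same mechanism Reed--Simon use inside the proof of Thm.~XIII.45), then the upgrade via the domination $\langle\phi,\exp(-tH)\psi\rangle\geq \exp(-t\|W_{N}\|_{\infty})\langle\phi,\exp(-t(H-W_{N}))\psi\rangle$ combined with the strong convergence $\exp(-t(H-W_{N}))\to\exp(-tH_{0})$, fixing $N$ separately for each pair of positive vectors $(\phi,\psi)$ --- which is all that positivity improving requires. Both arguments rest on the same two pillars (Trotter and strong-resolvent-to-semigroup convergence); the paper's packaging is shorter because it quotes Thm.~XIII.45 wholesale (at the cost of having to peek into Reed--Simon's proofs to drop an eigenvalue assumption), while yours is more self-contained and yields a stronger intermediate result, namely an explicit strictly positive lower bound on the matrix elements of $\exp(-tH)$.

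Two technical points. First, the Duhamel step you flag is indeed the delicate one, but it is justifiable (the perturbation $\|W_{N}\|_{\infty}-W_{N}$ is bounded, so $s\mapsto\exp(-(t-s)K)\exp(-sH)\psi$ is differentiable on $(0,t)$ with bounded derivative); alternatively it can be avoided entirely by proving the domination through Trotter itself: compare $\bigl(\exp(-\tfrac{t}{k}(H-W_{N}))\exp(-\tfrac{t}{k}W_{N})\bigr)^{k}\psi$ with $\bigl(\exp(-\tfrac{t}{k}(H-W_{N}))\exp(-\tfrac{t}{k}\|W_{N}\|_{\infty})\bigr)^{k}\psi$ factor by factor, using $W_{N}\leq\|W_{N}\|_{\infty}$ pointwise and positivity preservation of $\exp(-s(H-W_{N}))$, and pass to the limit in the pairing. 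Second, one phrasing should be corrected: that each $\exp(-t(H-W_{N}))$ is positivity preserving does \emph{not} follow from ``running the identical argument on $H-W_{N}\to H_{0}$'', since a strong limit only gives information about the limiting semigroup $\exp(-tH_{0})$, which is already known to be positive. What you need --- and what your Duhamel step actually uses --- is Trotter applied to the decomposition $H-W_{N}=H+(-W_{N})$, with base semigroup $\exp(-tH)$ just shown to be positivity preserving and $-W_{N}$ a bounded multiplication operator. With that reading, every step of your argument is sound.
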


\begin{proof}
 If $\exp(-tH_{0})$ is positivity improving, it is ergodic by definition. The proof of the implication (b)$\Rightarrow$(c) in~\cite[Thm.~XIII.43]{reed-simon-vol4} shows that the set of bounded multiplication operators and $\exp(-tH_{0})$ act irreducibly (here, we do not need the assumption that $\|\exp(-tH_{0})\|$ is~an eigenvalue). Then \cite[Thm.~XIII.45]{reed-simon-vol4} tells us that also the set of bounded diagonal operators and $\exp(-tH)$ act irreducibly and $\exp(-tH)$ is positivity preserving for all $t>0$. It means, by~\cite[Thm.~XIII.43]{reed-simon-vol4}, that $\exp(-tH)$ is ergodic for all $t>0$, and~\cite[Thm.~XIII.44]{reed-simon-vol4} implies the statement.
\end{proof}

\begin{thm}\label{thm:positive}
 Let $H_{V}$ be the discrete Schrödinger operator and $E:=\inf\sigma(H_{V})>-\infty$ be an eigenvalue of $H_{V}$. Then $E$ is simple and the corresponding eigenvector can be chosen strictly positive.
\end{thm}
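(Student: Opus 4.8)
The plan is to apply Proposition~\ref{prop:reed-simon} with $H=H_{V}$ and $H_{0}=-\Delta$, so that the entire task reduces to producing a suitable truncating sequence $W_{N}$ and checking that the free semigroup $\exp(t\Delta)$ is positivity improving. The natural choice is $W_{N}:=V\mathbf{1}_{B_{N}}$, the operator of multiplication by the potential restricted to the ball $B_{N}:=\{n\in\Z^{d}:|n|\le N\}$; each $W_{N}$ is multiplication by a compactly supported, hence bounded, function, and with this choice $H_{0}+W_{N}=-\Delta+V\mathbf{1}_{B_{N}}$ while $H-W_{N}=-\Delta+V\mathbf{1}_{\Z^{d}\setminus B_{N}}$.

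First I would record the elementary but decisive observation that $E>-\infty$ forces $V$ to be bounded from below. Indeed, testing the quadratic form against the basis vector $e_{n}$ gives $E\le\langle e_{n},H_{V}e_{n}\rangle=2d+V_{n}$, so $V_{n}\ge E-2d$ for every $n$. Consequently each truncated potential obeys $V\mathbf{1}_{B_{N}}\ge\min(E-2d,0)$ and $V\mathbf{1}_{\Z^{d}\setminus B_{N}}\ge\min(E-2d,0)$ pointwise, and since $-\Delta\ge0$ both $H_{0}+W_{N}$ and $H-W_{N}$ are bounded from below by $\min(E-2d,0)$ uniformly in $N$, which is precisely the uniform semiboundedness demanded by the proposition.

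Next I would establish the two strong resolvent convergences using $C_{c}(\Z^{d})$ as a common core. The finitely supported vectors form a core for the multiplication operator $V$, and adding the bounded operator $-\Delta$ leaves the domain and cores unchanged, so $C_{c}(\Z^{d})$ is a core for $H_{V}$; it is trivially a core for the bounded operator $-\Delta$ as well. For a fixed $\psi\in C_{c}(\Z^{d})$ and all $N$ large enough that $\supp\psi\subset B_{N}$ one has $(H_{0}+W_{N})\psi=-\Delta\psi+V\psi=H_{V}\psi$ and $(H-W_{N})\psi=-\Delta\psi=H_{0}\psi$; both sequences are thus eventually constant on this core, and the standard criterion (convergence on a common core of self-adjoint operators) yields $H_{0}+W_{N}\to H_{V}$ and $H-W_{N}\to H_{0}$ in the strong resolvent sense.

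Finally I would verify the positivity-improving hypothesis. Writing $-\Delta=2d\,\mathrm{I}-A$, where $A$ is the nonnegative nearest-neighbour adjacency operator, gives $\exp(-t(-\Delta))=e^{-2dt}\exp(tA)$. Since $\Z^{d}$ is connected under nearest-neighbour steps, for every pair $n,m$ there is a lattice path joining them, so $\langle e_{n},A^{k}e_{m}\rangle>0$ for some $k$; as all entries of each $A^{k}$ are nonnegative, every matrix element of $\exp(tA)=\sum_{k}t^{k}A^{k}/k!$ is strictly positive for $t>0$, whence $\exp(t\Delta)$ is positivity improving. With all hypotheses of Proposition~\ref{prop:reed-simon} in place, simplicity and strict positivity follow. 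The only genuinely delicate point is the diagonal lower bound $V_{n}\ge E-2d$: without it the truncations need not be uniformly semibounded, and it is exactly the boundedness of the discrete Laplacian that makes this estimate available, marking the main structural difference from the continuous case.
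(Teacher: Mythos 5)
Your proposal is correct and takes essentially the same route as the paper: both apply Proposition~\ref{prop:reed-simon} with the truncated potential $W_{N}=V\chi_{\{|n|\leq N\}}$ (the paper's $V_{N}=VP_{N}$) as the multiplication perturbation, obtain the uniform lower bounds from semiboundedness of $V$, and verify the positivity-improving hypothesis by showing all matrix elements of the free semigroup are strictly positive via the exponential series. The only cosmetic differences are that the paper shifts $H_{0}:=-\Delta-2d-1$ so that its induction argument avoids parity issues (the shift lets paths ``stay put''), while you work with $H_{0}=-\Delta=2d\,\mathrm{I}-A$ directly and use connectedness of the lattice graph, and you spell out the strong resolvent convergence through a common-core argument where the paper merely asserts it.
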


\begin{proof}
 For $N\in\N_{0}$, we denote by $P_{N}$ the orthogonal projection onto $\spn\{e_{n} \mid |n|\leq N\}$, where
 $\{e_{n} \mid n\in\Z^{d}\}$ is the standard basis of $\ell^{2}(\Z^{d})$, i.e. $(e_{n})_{m}=1$, if $m=n$, and $(e_{n})_{m}=0$, if $m\neq n$. Define $V_{N}:=VP_{N}=P_{N}V$. Clearly, $V_{N}$ is bounded for all $N\in\N_{0}$. Notice that $H_{V}$ is bounded from below if and only if the potential $V$ is a bounded function from below. Hence both $V_{N}$ and $V-V_{N}$ are uniformly bounded from below by the lower bound of $V$. Next, it is easy to see that $V_{N}\to V$ in the strong resolvent sense.

We intend to apply Proposition~\ref{prop:reed-simon} with $H:=H_{V}$, $H_{0}:=-\Delta-2d-1$, and $W_{N}:=V_{N}+2d+1$. If we verify that $\exp(-tH_{0})$ is positivity improving for all $t>0$, then all assumptions of Proposition~\ref{prop:reed-simon} are fulfilled and the claim follows.
To this end, we use the fact that if a bounded operator $A$ on $\ell^{2}(\Z^{d})$ satisfies
\[
\langle e_{n},Ae_{m}\rangle>0,
\] 
for all $m,n\in\Z^{d}$, then $A$ is positivity improving. We will show that this is the case for $\exp(-tH_{0})$.

Using the definition of $-\Delta$, we can express the action of $\Delta+2d+1$ to a function $\psi$ as
\[
 \left((\Delta+2d+1)\psi\right)_{n}=\psi_{n}+\!\sum_{\substack{s\in\Z^{d} \\ |s-n|=1}}\psi_{s}=\!\sum_{\substack{s\in\Z^{d} \\ |s-n|\leq1}}\psi_{s}.
\]
Then one readily checks by induction in $k\in\N$ that, for all $\psi\geq0$ and $n\in\Z^{d}$, one has
\[
 \left[(\Delta+2d+1)^{k}\psi\right]_{n}\geq \!\sum_{\substack{s\in\Z^{d} \\ |s-n|\leq k}}\psi_{s}.
\]
By the boundedness of $\Delta+2d+1$, we have
\[
 \exp(-tH_{0})=\sum_{k=0}^{\infty}\frac{t^{k}}{k!}(\Delta+2d+1)^{k}, \quad t>0,
\]
where the series converges in the operator norm. Hence for all $t>0$, $\psi\geq0$, and $n\in\Z^{d}$, we get the inequality
\[
 \left[\exp(-tH_{0})\psi\right]_{n}\geq \sum_{k=0}^{\infty}\frac{t^{k}}{k!}\!\sum_{\substack{s\in\Z^{d} \\ |s-n|\leq k}}\psi_{s}.
\]
Consequently, for any  $m,n\in\Z^{d}$ and $t>0$, we find that
\[
 \langle e_{n},\exp(-tH_{0})e_{m}\rangle\geq  \sum_{k=0}^{\infty}\frac{t^{k}}{k!}\!\sum_{\substack{s\in\Z^{d} \\ |s-n|\leq k}}\delta_{m,s}\geq\frac{t^{\ell}}{\ell!}>0,
\]
where $\ell\in\N_{0}$ is large enough so that $|m-n|\leq\ell$. The proof is complete.
\end{proof}

\subsection{Discrete Agmon's comparison principle}\label{subsec:agmon}
We deduce a discrete variant of Agmon's comparison principle given in~\cite[Thm.~2.7]{agm_85}. Although here it is derived as an essential tool for proofs of our main results, it is definitely a claim of independent interest, too.
 
First, we define discrete analogues to terms \emph{subsolution} and \emph{supersolution} of the equation $(H_{V}-\lambda)\psi=0$ in a subset of $\Z^{d}$, where $\lambda\in\R$. 

\begin{defn}\label{def:sub_sup}
 Let $\Omega\subset\Z^{d}$ and $\lambda\in\R$. Functions $u,w:\Z^{d}\to\R$ are called \emph{subsolution} and \emph{supersolution} of the equation $(H_{V}-\lambda)\psi=0$ in $\Omega$, if
 \[
   \left[(H_{V}-\lambda)u\right]_{n}\leq0 \quad\mbox{ and }\quad \left[(H_{V}-\lambda)w\right]_{n}\geq0,
 \]
 for all $n\in\Omega$, respectively.
\end{defn}

\begin{rem}
 To compare the above definition with its continuous traditional form one needs to realize that 
 $u$ is a subsolution of $(H_{V}-\lambda)\psi=0$ in $\Omega$ if and only if 
 \[
   \langle\phi,(H_{V}-\lambda)u\rangle\leq0,
 \] 
 for all \emph{non-negative} $\phi\in C_{c}(\Omega)$, where $C_{c}(\Omega)$ is the space of compactly supported functions in $\Omega$. A similar claim with the opposite inequality holds true for a supersolution~$w$.
\end{rem}

Further we will need two auxiliary observations. First notice that 
\begin{equation}
-\Delta=\sum_{j=1}^{d}D_{j}^{*}D_{j}
\label{eq:delta_parc_dif}
\end{equation}
where $D_{j}$ is the first order partial difference operator defined by
\[
\left( D_{j}\psi\right)_{n}:=\psi_{n}-\psi_{n-\delta_{j}}
\]
and $D_{j}^{*}$ its adjoint acting as 
\[
\left( D_{j}^{*}\psi\right)_{n}:=\psi_{n}-\psi_{n+\delta_{j}}
\]
for all $n\in\Z^{d}$ and $\psi:\Z^{d}\to\C$, where $\delta_{j}$ denotes the $j$-th vector of the standard basis of $\C^{d}$. 
The second observation is formulated in the following lemma. 

\begin{lem}\label{lem:v_subsol_v_+_subsol}
 If $u$ is a subsolution of $(H_{V}-\lambda)\psi=0$ in $\Omega\subset\Z^{d}$, then $u_{+}:=\max(0,u)$ is also a subsolution of $(H_{V}-\lambda)\psi=0$ in $\Omega$.
\end{lem}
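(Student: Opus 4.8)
The plan is to show that $u_+$ satisfies $[(H_V-\lambda)u_+]_n\leq 0$ for every $n\in\Omega$. Fix such an $n$. The argument splits naturally according to the sign of $u_n$. The key algebraic point is that $H_V-\lambda$ acts diagonally through the potential term $(V_n-\lambda)(u_+)_n$ and off-diagonally through the discrete Laplacian, which couples $n$ only to its nearest neighbours $m$ with $|m-n|=1$. Writing out the definition,
\[
\left[(H_V-\lambda)u_+\right]_n
=(2d+V_n-\lambda)(u_+)_n-\!\sum_{\substack{m\in\Z^d\\|m-n|=1}}\!(u_+)_m,
\]
so I only need to compare this expression against the corresponding one for $u$, using the elementary inequalities $(u_+)_m\geq u_m$ (valid for all $m$) and $(u_+)_n=\max(0,u_n)$.

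First I would treat the case $u_n\leq 0$. Then $(u_+)_n=0$, so the diagonal term vanishes, and
\[
\left[(H_V-\lambda)u_+\right]_n
=-\!\sum_{\substack{m\in\Z^d\\|m-n|=1}}\!(u_+)_m\leq 0,
\]
since every $(u_+)_m\geq 0$. This case requires no use of the subsolution hypothesis at all. Next I would treat the case $u_n>0$, where $(u_+)_n=u_n$, so the diagonal contribution $(2d+V_n-\lambda)(u_+)_n$ equals exactly the diagonal contribution of $(H_V-\lambda)u$ at $n$. For the off-diagonal sum I use $-(u_+)_m\leq -u_m$ neighbour by neighbour to get
\[
\left[(H_V-\lambda)u_+\right]_n
=(2d+V_n-\lambda)u_n-\!\sum_{\substack{m\in\Z^d\\|m-n|=1}}\!(u_+)_m
\leq(2d+V_n-\lambda)u_n-\!\sum_{\substack{m\in\Z^d\\|m-n|=1}}\!u_m
=\left[(H_V-\lambda)u\right]_n\leq 0,
\]
where the final inequality is exactly the subsolution property of $u$ in $\Omega$. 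Combining the two cases gives $[(H_V-\lambda)u_+]_n\leq 0$ for all $n\in\Omega$, which is the claim.

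I do not expect any serious obstacle here: the statement is the discrete counterpart of the classical fact that the positive part of a subsolution is a subsolution, and in the lattice setting the proof reduces to a case distinction plus the monotonicity $(u_+)_m\geq u_m$. The only point demanding a little care is the sign bookkeeping in the discrete Laplacian, namely that the off-diagonal coefficients of $-\Delta$ are negative (so that replacing $u_m$ by the larger $(u_+)_m$ decreases the expression), and that in the case $u_n>0$ one must invoke the hypothesis only after the diagonal terms have been matched. The representation $-\Delta=\sum_{j}D_j^*D_j$ in~\eqref{eq:delta_parc_dif} is not strictly needed for this short argument, though it offers an alternative route via the pointwise estimate $[D_j^*D_j\,u_+]_n\le[D_j^*D_j\,u]_n$ on the set where $u_n>0$; I would favour the direct nearest-neighbour computation above for transparency.
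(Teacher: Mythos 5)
Your proof is correct and follows essentially the same route as the paper's: the same case distinction on the sign of $u_{n}$, with the case $u_{n}\leq0$ handled by positivity of $u_{+}$ alone and the case $u_{n}>0$ handled by matching the diagonal terms and using $(u_{+})_{m}\geq u_{m}$ on the neighbours before invoking the subsolution property of $u$. No gaps; nothing to add.
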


\begin{proof}
Let $n\in\Omega$ is fixed. If $u_{n}\leq0$, then $(u_{+})_{n}=0$ and therefore the inequality $\left[(H_{V}-\lambda)u_{+}\right]_{n}\leq0$ is equivalent to the inequality 
 \[
  -\!\sum_{\substack{m\in\Z^{d} \\ |m-n|=1}}(u_{+})_{m}\leq0,
 \]
 which is true since $u_{+}\geq0$.
 
 If $u_{n}>0$, then $u_{n}=(u_{+})_{n}$. Taking also into account that $u\leq u_{+}$ and the assumption, we estimate $\left[(H_{V}-\lambda)u_{+}\right]_{n}$ by
 \[
 (2d+V_{n}-\lambda)(u_{+})_{n}-\!\!\sum_{\substack{m\in\Z^{d} \\ |m-n|=1}}(u_{+})_{m}\leq  (2d+V_{n}-\lambda)u_{n}-\!\!\sum_{\substack{m\in\Z^{d} \\ |m-n|=1}}u_{m}=\left[(H_{V}-\lambda)u\right]_{n}
\leq0,
 \]
 which concludes the proof.
\end{proof}

Next, we prove a discrete variant of Agmon's comparison theorem. Recall the notation
$\Z_{\geq N}^{d}:=\{n\in\Z^{d} \mid |n|\geq N\}$ for $N>0$.

\begin{thm}\label{thm:comp}
 Let $N\in\N$, $\lambda\in\R$, and $w$ be a strictly positive supersolution of $(H_{V}-\lambda)\psi=0$ in $\Z_{\geq N}^{d}$. Suppose further that $u$ is a subsolution of $(H_{V}-\lambda)\psi=0$ in $\Z^{d}_{\geq N}$ satisfying
\begin{equation}
\liminf_{M\to\infty}\frac{1}{M^{2}}\sum_{M\leq|n|\leq\alpha M}\,\sum_{j=1}^{d}|u_{n}u_{n-\delta_{j}}|=0
\label{eq:agmon_assum}
\end{equation}
for some $\alpha>1$. Then, for all $n\in\Z_{\geq N}^{d}$, one has
\begin{equation}
  u_{n}\leq Cw_{n},
\label{eq:u_leq_Cw}
\end{equation}
where $C$ is any positive constant such that 
\begin{equation}
C \geq \max\left\{\frac{u_{n}}{w_{n}} \;\,\Big|\;\, N-1\leq|n|<N+1 \right\}.
\label{eq:const_C}
\end{equation}

\end{thm}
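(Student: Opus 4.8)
The plan is to prove \eqref{eq:u_leq_Cw} by showing that the nonnegative function $f:=(u-Cw)_{+}$ vanishes on $\Z_{\geq N}^{d}$. First I would record the reductions. Since $u$ is a subsolution, $w$ a supersolution, and $C>0$, the function $u-Cw$ is again a subsolution of $(H_{V}-\lambda)\psi=0$ in $\Z_{\geq N}^{d}$, so by Lemma~\ref{lem:v_subsol_v_+_subsol} so is $f\geq0$. The choice \eqref{eq:const_C} forces $f_{n}=0$ on the collar $N-1\leq|n|<N+1$; replacing $f$ by $f\cdot\mathbf{1}_{\{|n|\geq N\}}$ I may thus assume in addition that $f$ vanishes for $|n|<N+1$ while still being a nonnegative subsolution on $\Z_{\geq N}^{d}$ (at points with $N\leq|n|<N+1$ the inequality reduces to $-\sum_{|m-n|=1}f_{m}\leq0$, which holds as $f\geq0$). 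Moreover $0\leq f_{n}\leq|u_{n}|$, so $f$ inherits \eqref{eq:agmon_assum}.

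The heart of the argument is two elementary bond identities obtained from \eqref{eq:delta_parc_dif} by a direct computation on a single edge. Writing $\Phi:=f/w$, a discrete Picone identity
\[
(D_{j}f)_{n}^{2}=\bigl(D_{j}(f^{2}/w)\bigr)_{n}(D_{j}w)_{n}+w_{n}w_{n-\delta_{j}}(D_{j}\Phi)_{n}^{2},
\]
when summed over $j,n$, yields a discrete ground-state representation valid for compactly supported $\phi$ with $\psi:=w\phi$:
\[
\langle\psi,(H_{V}-\lambda)\psi\rangle=\sum_{j,n}w_{n}w_{n-\delta_{j}}(D_{j}\phi)_{n}^{2}+\sum_{n}w_{n}\phi_{n}^{2}\,[(H_{V}-\lambda)w]_{n}.
\]
Second, from $\bigl(D_{j}(\chi f)\bigr)_{n}^{2}-\bigl(D_{j}(\chi^{2}f)\bigr)_{n}(D_{j}f)_{n}=f_{n}f_{n-\delta_{j}}(D_{j}\chi)_{n}^{2}$ I would derive a discrete IMS localization formula: for any $\chi$,
\[
\langle\chi f,(H_{V}-\lambda)(\chi f)\rangle=\langle\chi^{2}f,(H_{V}-\lambda)f\rangle+\sum_{j,n}f_{n}f_{n-\delta_{j}}(D_{j}\chi)_{n}^{2}.
\]

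Next I would run the comparison with $\phi:=\chi_{M}\Phi$ and $\psi=\chi_{M}f$, where $\chi_{M}=1$ on $\{|n|\leq M\}$, $\chi_{M}=0$ on $\{|n|\geq\alpha M\}$, and $|D_{j}\chi_{M}|\lesssim M^{-1}$. In the representation the second sum is $\geq0$: its integrand is supported where $f\neq0$, i.e. on $\{|n|\geq N+1\}\subset\Z_{\geq N}^{d}$, where $w>0$ and the supersolution inequality $[(H_{V}-\lambda)w]_{n}\geq0$ holds. Hence $\langle\chi_{M}f,(H_{V}-\lambda)(\chi_{M}f)\rangle\geq\sum_{j,n}w_{n}w_{n-\delta_{j}}(D_{j}(\chi_{M}\Phi))_{n}^{2}$. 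On the other hand $\chi_{M}^{2}f\geq0$ is compactly supported in $\Z_{\geq N}^{d}$, so the subsolution property gives $\langle\chi_{M}^{2}f,(H_{V}-\lambda)f\rangle\leq0$, and the localization formula bounds the same energy from above by the error $\sum_{j,n}f_{n}f_{n-\delta_{j}}(D_{j}\chi_{M})_{n}^{2}$. Combining the two bounds,
\[
\sum_{j,n}w_{n}w_{n-\delta_{j}}(D_{j}(\chi_{M}\Phi))_{n}^{2}\leq\sum_{j,n}f_{n}f_{n-\delta_{j}}(D_{j}\chi_{M})_{n}^{2}\lesssim\frac{1}{M^{2}}\sum_{M\leq|n|\leq\alpha M}\sum_{j=1}^{d}|u_{n}u_{n-\delta_{j}}|.
\]

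Finally I would let $M\to\infty$. By \eqref{eq:agmon_assum} the right-hand side tends to $0$ along a subsequence $M_{k}\to\infty$, while $\chi_{M}\to1$ pointwise; Fatou's lemma applied to the nonnegative left-hand side then forces $\sum_{j,n}w_{n}w_{n-\delta_{j}}(D_{j}\Phi)_{n}^{2}=0$. As $w>0$, this gives $(D_{j}\Phi)_{n}=0$ for all $j,n$, so $\Phi$ is constant on the connected lattice $\Z^{d}$; since $\Phi$ vanishes for $|n|<N+1$, we conclude $\Phi\equiv0$, whence $f\equiv0$ and \eqref{eq:u_leq_Cw} follows. I expect the main obstacle to be exactly this last marriage of the cutoff with the weak one-sided hypothesis \eqref{eq:agmon_assum}: the localization error must be matched to the precise annulus and $M^{-2}$ rate in \eqref{eq:agmon_assum}, and every pairing and summation by parts must be justified through the compact support of $\chi_{M}f$. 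The collar condition \eqref{eq:const_C} is what makes the supersolution sign available on the entire support of the test function and lets the truncation at $|n|=N$ preserve the subsolution property.
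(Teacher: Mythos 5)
Your proof is correct and follows the same skeleton as the paper's: the comparison function $(u-Cw)_{+}$, Lemma~\ref{lem:v_subsol_v_+_subsol}, a combined weighted-gradient inequality, a piecewise-linear cutoff producing an $M^{-2}$ error on an annulus, and Fatou's lemma. Two organizational choices genuinely differ, and both are worth noting. First, where the paper devotes its Step~0 to a summation-by-parts identity on $\{|n|\geq N\}$ valid for test functions vanishing on the collar $N-1\leq|n|<N+1$, you instead extend $(u-Cw)_{+}$ by zero and verify that the extension remains a nonnegative subsolution on $\Z^{d}_{\geq N}$ (the check at the collar reduces to $-\sum_{|m-n|=1}f_{m}\leq0$, and points with $|n|\geq N+1$ have all neighbours in $\Z^{d}_{\geq N-1}$, so nothing changes there); this lets you sum over the whole lattice and dispenses with boundary bookkeeping at $|n|=N$. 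Second, you package the paper's elementary bond identities (its Steps~1--3, which derive one quadratic-form inequality from the subsolution, one from the supersolution, and merge them via the substitution $\varphi=\xi t/w$) into two named, independently quotable facts — a discrete Picone/ground-state representation and a discrete IMS localization formula; both bond identities check out, and the resulting combined inequality is literally the paper's \eqref{eq:ineq_combined_inproof}. What each approach buys: the paper's restricted-sum formulation never has to touch values of the subsolution inside the ball, while your truncation yields global identities that are cleaner to state and reuse. The only point needing care is the one you flag yourself: with your cutoff ($1$ on $|n|\leq M$, $0$ on $|n|\geq\alpha M$) the difference quotients can be nonzero slightly outside $\{M\leq|n|\leq\alpha M\}$, so the localization error is not literally dominated by the sum in \eqref{eq:agmon_assum}; the paper sidesteps this by taking the plateau boundaries $M+1$ and $\alpha M-1$, and the same adjustment closes your argument with no other change.
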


\begin{rem}
Notice that the constant $C>0$ satisfying~\eqref{eq:const_C} always exists because 
the set of indices $n\in\Z^{d}$ with $N-1\leq|n|<N+1$ is finite and $w_{n}>0$ for all $n\in\Z^{d}$.
\end{rem}

\begin{proof}[Proof of Theorem~\ref{thm:comp}]
Pick any $C>0$ satisfying~\eqref{eq:const_C}. Notice that then inequality~\eqref{eq:u_leq_Cw} holds true for all $n\in\Z^{d}$ with $N-1\leq|n|<N+1$.
Define the auxiliary function 
\begin{equation}
 t:=(u-Cw)_{+}.
\label{eq:def_t}
\end{equation}
Then $t_{n}=0$ for $n\in\Z^{d}$ with $N-1\leq|n|<N+1$. We show that then $t$ must 
vanish identically in $\Z_{\geq N}^{d}$, which completes the proof. The proof proceeds in several steps. Throughout the proof summation indices are elements of $\Z^{d}$ restricted further by displayed inequalities.

\emph{Step~0: A summation by parts identity.} Let $\phi,\psi\in C_{c}(\Z^{d})$. We show that, if $\phi_{n}=0$ for all $n\in\Z^{d}$ such that $N-1\leq |n|< N+1$, then the identity
\begin{equation}
\sum_{|n|\geq N}\phi_{n}\left(D_{j}^{*}\psi\right)_{n}=\sum_{|n|\geq N}(D_{j}\phi)_{n}\psi_{n}
\label{eq:aux_id_inproof}
\end{equation}
holds for all $j\in\{1,\dots,d\}$. Indeed, to verify~\eqref{eq:aux_id_inproof}, we expand the left-hand side getting
\begin{equation}
\sum_{|n|\geq N}\phi_{n}\left(D_{j}^{*}\psi\right)_{n}=\sum_{|n|\geq N}\phi_{n}\psi_{n}-\sum_{|n|\geq N}\phi_{n}\psi_{n+\delta_{j}}=\sum_{|n|\geq N}\phi_{n}\psi_{n}-\sum_{|n-\delta_{j}|\geq N}\phi_{n-\delta_{j}}\psi_{n}.
\label{eq:proof_st1_inproof}
\end{equation}
Notice that if a multi-index $n\in\Z^{d}$ satisfies $|n-\delta_{j}|\geq N$ and $|n|<N$, then $N\leq |n-\delta_{j}|<N+1$ and $\phi_{n-\delta_{j}}=0$ by the assumption. Similarly, if $|n-\delta_{j}|< N$ and $|n|\geq N$, then $N-1\leq |n-\delta_{j}|<N$ and so again $\phi_{n-\delta_{j}}=0$ by the assumption. It means that the last sum in~\eqref{eq:proof_st1_inproof} remains unchanged when the restriction $|n-\delta_{j}|\geq N$ is replaced by $|n|\geq N$. Thus we arrive at the equality 
\[
\sum_{|n|\geq N}\phi_{n}\left(D_{j}^{*}\psi\right)_{n}=\sum_{|n|\geq N}\phi_{n}\psi_{n}-\sum_{|n|\geq N}\phi_{n-\delta_{j}}\psi_{n}
\]
which amounts to~\eqref{eq:aux_id_inproof}.

\emph{Step 1: An inequality from the subsolution.} We establish the identity  
\begin{equation}
\sum_{|n|\geq N}\xi_{n}^{2}t_{n}(-\Delta t)_{n}=\sum_{|n|\geq N}\sum_{j=1}^{d}\left[D_{j}(\xi t)\right]_{n}^{2}-\sum_{|n|\geq N}\sum_{j=1}^{d}t_{n}t_{n-\delta_{j}}\left(D_{j}\xi\right)_{n}^{2},
\label{eq:aux_id2_inproof}
\end{equation}
which holds true for any $t:\Z^{d}\to\R$ and $\xi\in C_{c}(\Z^{d})$ such that $\xi_{n}t_{n}=0$ whenever $N-1\leq |n|< N+1$. By applying~\eqref{eq:delta_parc_dif} and formula~\eqref{eq:aux_id_inproof} with $\phi=\xi^{2}t$ and $\psi=D_{j}t$, we find
\[
\sum_{|n|\geq N}\xi_{n}^{2}t_{n}(-\Delta t)_{n}=\sum_{j=1}^{d}\sum_{|n|\geq N}\xi_{n}^{2}t_{n}(D_{j}^{*}D_{j}t)_{n}=\sum_{j=1}^{d}\sum_{|n|\geq N}[D_{j}(\xi^{2}t)]_{n}(D_{j}t)_{n}.
\]
Next, by expanding the right-hand side and using the elementary identity
\[
 (\xi_{n}^{2}t_{n}-\xi_{n-\delta_{j}}^{2}t_{n-\delta_{j}})(t_{n}-t_{n-\delta_{j}})=(\xi_{n}t_{n}-\xi_{n-\delta_{j}}t_{n-\delta_{j}})^{2}-t_{n}t_{n-\delta_{j}}(\xi_{n}-\xi_{n-\delta_{j}})^2,
\]
we arrive at~\eqref{eq:aux_id2_inproof}.

Now suppose additionally that $t$ is a positive subsolution of $(H_{V}-\lambda)\psi=0$ in $\Z_{\geq N}^{d}$. It follows that 
\[
 0\geq \sum_{|n|\geq N}\xi_{n}^{2}t_{n}[(H_{V}-\lambda)t]_{n}=\sum_{|n|\geq N}\xi_{n}^{2}t_{n}(-\Delta t)_{n}+\sum_{|n|\geq N}(V_{n}-\lambda)\xi_{n}^{2}t_{n}^{2}.
\]
An application of identity~\eqref{eq:aux_id2_inproof} yields the inequality 
\begin{equation}
\sum_{|n|\geq N}\sum_{j=1}^{d}\left[D_{j}(\xi t)\right]_{n}^{2}+\sum_{|n|\geq N}(V_{n}-\lambda)\xi_{n}^{2}t_{n}^{2}\leq\sum_{|n|\geq N}\sum_{j=1}^{d}t_{n}t_{n-\delta_{j}}\left(D_{j}\xi\right)_{n}^{2}
\label{eq:ineq_subsol_inproof}
\end{equation}
for any $\xi\in C_{c}(\Z^{d})$ and a positive subsolution $t$ of $(H_{V}-\lambda)\psi=0$ in $\Z_{\geq N}^{d}$ satisfying $t_{n}=0$ if $N-1\leq |n|< N+1$ (to guarantee that $t_{n}\xi_{n}=0$).

\emph{Step~2: An inequality from the supersolution.} Let $w:\Z^{d}\to\R$ and $\psi\in C_{c}(\Z^{d})$ is such that $\varphi_{n}=0$ if $N-1\leq |n|< N+1$. Then identity~\eqref{eq:aux_id2_inproof} with $t$ replaced by $w$ and $\xi$ replaced by $\varphi$ reads
\begin{equation}
\sum_{|n|\geq N}w_{n}{\varphi}_{n}^{2}(-\Delta w)_{n}=\sum_{|n|\geq N}\sum_{j=1}^{d}\left[D_{j}(w{\varphi})\right]_{n}^{2}-\sum_{|n|\geq N}\sum_{j=1}^{d}w_{n}w_{n-\delta_{j}}\left(D_{j}{\varphi}\right)_{n}^{2}.
\label{eq:aux_id3_inproof}
\end{equation}
Suppose additionally that $w$ is a positive supersolution of $(H_{V}-\lambda)\psi=0$ in $\Z_{\geq N}^{d}$. Then 
\[
 0\leq \sum_{|n|\geq N}w_{n}{\varphi}_{n}^{2}[(H_{V}-\lambda)w]_{n}=\sum_{|n|\geq N}w_{n}{\varphi}_{n}^{2}(-\Delta w)_{n}+\sum_{|n|\geq N}(V_{n}-\lambda)w_{n}^{2}{\varphi}_{n}^{2}.
\]
An application of identity~\eqref{eq:aux_id3_inproof} yields the inequality 
\begin{equation}
\sum_{|n|\geq N}\sum_{j=1}^{d}\left[D_{j}(w{\varphi})\right]_{n}^{2}+\sum_{|n|\geq N}(V_{n}-\lambda)w_{n}^{2}{\varphi}_{n}^{2}\geq\sum_{|n|\geq N}\sum_{j=1}^{d}w_{n}w_{n-\delta_{j}}\left(D_{j}{\varphi}\right)_{n}^{2}
\label{eq:ineq_supersol_inproof}
\end{equation}
for any positive supersolution $w$ of $(H_{V}-\lambda)\psi=0$ in $\Z_{\geq N}^{d}$ and ${\varphi}\in C_{c}(\Z^{d})$ satisfying ${\varphi}_{n}=0$ if $N-1\leq |n|< N+1$.

\emph{Step~3: A combined inequality.} Assuming that $w$ is a \emph{strictly} positive supersolution of $(H_{V}-\lambda)\psi=0$ in $\Z_{\geq N}^{d}$ we may plug $\varphi=\xi t/w$ into~\eqref{eq:ineq_supersol_inproof}. Then its left-hand side coincides with the left-hand side of~\eqref{eq:ineq_subsol_inproof} and we deduce the inequality
\begin{equation}
\sum_{|n|\geq N}\sum_{j=1}^{d}w_{n}w_{n-\delta_{j}}\!\left[D_{j}\!\left(\frac{\xi t}{w}\right)\right]_{n}^{2}\leq \sum_{|n|\geq N}\sum_{j=1}^{d}t_{n}t_{n-\delta_{j}}\left(D_{j}\xi\right)_{n}^{2}
\label{eq:ineq_combined_inproof}
\end{equation}
for any $\xi\in C_{c}(\Z^{d})$, $t$ is a positive subsolution of $(H_{V}-\lambda)\psi=0$ in $\Z_{\geq N}^{d}$ satisfying $t_{n}=0$ if $N-1\leq |n|< N+1$, and $w$ a strictly positive supersolution $(H_{V}-\lambda)\psi=0$ in $\Z_{\geq N}^{d}$.

\emph{Step~4: Proof of the statement.}
Let the assumptions of the statement are fulfilled and define $t$ by~\eqref{eq:def_t}.
Then, by Lemma~\ref{lem:v_subsol_v_+_subsol}, $t$ is a subsolution of $(H_{V}-\lambda)\psi=0$ in $\Z_{\geq N}^{d}$ and $t_{n}=0$ for all $n\in\Z^{d}$ such that $N-1\leq |n|<N+1$.

For $M>N$ sufficiently large and $\alpha>1$, we substitute $\xi_{n}=g^{(M)}(|n|)$ into~\eqref{eq:ineq_combined_inproof} with the piece-wise linear function
\[
g^{(M)}(r):=\begin{cases}
 1 & \quad\mbox{ if }\hskip54pt r\leq M+1,\\
 \frac{\alpha M-1-r}{(\alpha-1)M-2} & \quad\mbox{ if }\; \quad M+1<r<\alpha M-1,\\
 0 & \quad\mbox{ if }\hskip6pt \alpha M-1\leq r.
\end{cases}
\]
With this choice, one finds that 
\[
 \left(D_{j}g^{(M)}\right)_{n}^{2}\leq\frac{1}{[(\alpha-1)M-2]^{2}}\,\chi_{\{M\leq|n|\leq\alpha M\}}(n)\leq\frac{{A}}{M^{2}}\,\chi_{\{M\leq|n|\leq\alpha M\}}(n)
\]
for a sufficiently large constant ${A}>0$, where $\chi$ stands for the indicator function. Taking also into account that $t\leq u_{+}\leq|u|$, we infer from~\eqref{eq:ineq_combined_inproof} that
\[
\sum_{|n|\geq N}\sum_{j=1}^{d}w_{n}w_{n-\delta_{j}}\!\left[D_{j}\!\left(\frac{g^{(M)}t}{w}\right)\right]_{n}^{2}\leq \frac{{A}}{M^{2}}\sum_{M\leq|n|\leq\alpha M}\,\sum_{j=1}^{d}|u_{n}u_{n-\delta_{j}}|
\]
for all $M$ sufficiently large. Applying Fatou's lemma and assumption~\eqref{eq:agmon_assum}, we get
\[
\sum_{|n|\geq N}\sum_{j=1}^{d}w_{n}w_{n-\delta_{j}}\!\left[D_{j}\!\left(\frac{t}{w}\right)\right]_{n}^{2}\leq \liminf_{M\to\infty}\frac{{A}}{M^{2}}\sum_{M\leq|n|\leq\alpha M}\,\sum_{j=1}^{d}|u_{n}u_{n-\delta_{j}}|=0.
\]
Since $w$ is strictly positive all terms of the sum on the left-hand side have to vanish. It implies that
\[
 \left[D_{j}\left(\frac{t}{w}\right)\right]_{n}=\frac{t_{n}}{w_{n}}-\frac{t_{n-\delta_{j}}}{w_{n-\delta_{j}}}=0
\]
for all $n\in\Z_{\geq N}^{d}$ and $j\in\{1,\dots,d\}$. Recalling the assumption $t_{n}=0$ in $N-1\leq |n|<N+1$, we conclude that $t_{n}=0$ in all of $\Z_{\geq N}^{d}$ and the proof is completed.
\end{proof}

\begin{rem}
Following exactly the definitions, the strictly positive supersolution $w$ of $(H_{V}-\lambda)\psi=0$ in $\Z^{d}_{\geq N}$ is a function $w:\Z^{d}\to(0,\infty)$ such that the inequality $[(H_{V}-\lambda))w]_{n}\geq0$ holds for every index $n\in\Z_{\geq N}^{d}$. However, the proof of Theorem~\ref{thm:comp} takes only the values of $w_{n}$ with $|n|\geq N-1$ into account. For $|n|<N-1$, values of $w_{n}$ can be taken arbitrary positive numbers. 
Therefore in proofs of Theorems~\ref{thm:absence_cond} and~\ref{thm:existence_cond} below, where Theorem~\ref{thm:comp} is applied with $N$ large, we need not care when $w$ (and similarly $u$) are chosen as functions not defined on a ball of finite radius.
\end{rem}

\begin{rem}\label{rem:subsol_decay}
If the subsolution $u$ in Theorem~\ref{thm:comp} satisfies $u_{n}=O(|n|^{-\gamma})$ for $|n|\to\infty$,
with $\gamma>(d-2)/2$, then~\eqref{eq:agmon_assum} holds true. Indeed, since the number of lattice points in $\R^{d}$ inside a ball of radius $R$ equals $O(R^{d})$, as $R\to\infty$, we deduce that 
\[
\sum_{M\leq|n|\leq\alpha M}1=O(M^{d}), \quad\mbox{ as } M\to\infty,
\]
for any $\alpha>1$. Hence, with a sufficiently large constant $C>0$, we have
\[
\frac{1}{M^{2}}\sum_{M\leq|n|\leq\alpha M}\,\sum_{j=1}^{d}|u_{n}u_{n-\delta_{j}}|\leq\frac{C}{M^{2+2\gamma}}\sum_{M\leq|n|\leq\alpha M}1=O(M^{d-2\gamma-2}), \quad\mbox{ as } M\to\infty.
\]
\end{rem}

\subsection{Expansions and inequalities}\label{subsec:expan_ineq}

We prove two, in a sense complementary, inequalities which will be essential in the forthcoming derivation of the absence and existence conditions for potentials. To this end, we recall that by $\log_{k}$ we denote the composition of $k$ natural logarithms with the convention that $\log_{0}$ stands for the identity, i.e.,
\[
\log_{0} x=x \quad \mbox{ and } \quad \log_{k}x=\log(\log_{k-1}x) \;\mbox{ for } k\in\N.
\]
Clearly, $\log_{k}x>0$ if $x>\ee_{k-1}$, where 
\[
\ee_{-1}=0 \quad \mbox{ and } \quad \ee_{k}=\exp \ee_{k-1} \;\mbox{ for } k\in\N_{0}.
\]
Further, for $s\in\N_{0}$ and $\varepsilon\geq0$, we define positive functions 
\begin{equation}
 b_{n}^{s}(\varepsilon):=|n|^{-d/2}\left(\prod_{i=1}^{s}\log_{i}^{-1/2}|n|\right)\log_{s}^{-\varepsilon/4}|n|,
\label{eq:def_b_s_eps}
\end{equation}
where $n\in\Z^{d}$ with $|n|>\ee_{s-1}$. For $\varepsilon=0$, we briefly write
\begin{equation}
 b_{n}^{s}:=b_{n}^{s}(0)=|n|^{-d/2}\prod_{i=1}^{s}\log_{i}^{-1/2}|n|.
\label{eq:def_b_s}
\end{equation}
The two key inequalities will be direct consequences of the following asymptotic expansions.

\begin{prop}
 Let $s\in\N$ and $\varepsilon>0$. For $|n|\to\infty$, we have
 \begin{equation}
 \frac{(\Delta b^{s})_{n}}{b_{n}^{s}}=\frac{d(4-d)}{4|n|^{2}}+\frac{1}{|n|^{2}}\sum_{j=1}^{s}\prod_{k=1}^{j}\frac{1}{\log_{k}|n|}+\frac{3}{4|n|^{2}\log^{2}|n|}+O\left(\frac{1}{|n|^{2}\log^{2}|n|\log_{2}|n|}\right)
 \label{eq:b_s_expan}
 \end{equation}
and
\begin{equation}
 \frac{(\Delta b^{s}(\varepsilon))_{n}}{b_{n}^{s}(\varepsilon)}=\frac{d(4-d)}{4|n|^{2}}+\frac{1}{|n|^{2}}\sum_{j=1}^{s}\prod_{k=1}^{j}\frac{1}{\log_{k}|n|}+\frac{\varepsilon}{2|n|^{2}}\prod_{k=1}^{s}\frac{1}{\log_{k}|n|}+O_{\varepsilon}\left(\frac{1}{|n|^{2}\log^{2}|n|}\right),
 \label{eq:b_s_eps_expan}
\end{equation}
where the index $\varepsilon$ in the second Landau symbol indicates the dependence of the reminder on~$\varepsilon$.
\end{prop}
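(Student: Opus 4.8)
The plan is to reduce everything to a one–dimensional (radial) computation and then to read off the expansion from the logarithmic derivative of the profile function. Since $b^{s}$ and $b^{s}(\varepsilon)$ are radial, write $b_{n}^{s}=f(|n|)$ with $f(r)=r^{-d/2}\prod_{i=1}^{s}(\log_{i}r)^{-1/2}$, and $b_{n}^{s}(\varepsilon)=f_{\varepsilon}(|n|)$ with the extra factor $(\log_{s}r)^{-\varepsilon/4}$. First I would write the discrete Laplacian as a sum of second differences, $(\Delta b^{s})_{n}=\sum_{j=1}^{d}\bigl[f(|n+\delta_{j}|)+f(|n-\delta_{j}|)-2f(|n|)\bigr]$, and set $g_{j}^{\pm}:=|n\pm\delta_{j}|-|n|=\sqrt{|n|^{2}\pm 2n_{j}+1}-|n|$.

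Next I would Taylor expand $f$ about $r=|n|$ to third order with remainder. The point is that although each increment $g_{j}^{\pm}$ is only $O(1)$ (it is as large as $\pm1$ when $n$ lies near a coordinate axis), the derivatives $f^{(k)}(r)$ decay like $r^{-d/2-k}$ uniformly on $[r-1,r+1]$, so the $k$-th Taylor term is $O(r^{-d/2-k})=O\!\bigl(f(r)\,r^{-k}\bigr)$. After summing over $j$ and using $\sum_{j}n_{j}^{2}=r^{2}$, the three surviving moments satisfy $\sum_{j}(g_{j}^{+}+g_{j}^{-})=\tfrac{d-1}{r}+O(r^{-3})$, $\sum_{j}[(g_{j}^{+})^{2}+(g_{j}^{-})^{2}]=2+O(r^{-2})$, and $\sum_{j}[(g_{j}^{+})^{3}+(g_{j}^{-})^{3}]=O(r^{-1})$, while the order-four remainder contributes $O(f\,r^{-4})$. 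Collecting terms gives
\[
\frac{(\Delta b^{s})_{n}}{b_{n}^{s}}=\frac{f''(r)}{f(r)}+\frac{d-1}{r}\frac{f'(r)}{f(r)}+O(r^{-4}),
\]
so the discrete Laplacian agrees with the continuous radial Laplacian up to an error $O(r^{-4})$, which sits far below both claimed remainders.

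It then remains to expand the continuous radial expression, and here the logarithmic derivative does the work. Writing $L_{i}:=\prod_{k=1}^{i}(\log_{k}r)^{-1}$ and $Q:=\sum_{i=1}^{s}L_{i}$, direct differentiation gives $f'/f=-P/r$ with $P:=\tfrac{d}{2}+\tfrac12 Q$, and $f''/f=(f'/f)'+(f'/f)^{2}$ then yields the exact identity
\[
\frac{f''}{f}+\frac{d-1}{r}\frac{f'}{f}=\frac{P(P+2-d)}{r^{2}}-\frac{P'}{r}=\frac{d(4-d)}{4r^{2}}+\frac{Q}{r^{2}}+\frac{Q^{2}}{4r^{2}}-\frac{P'}{r},
\]
using $P(P+2-d)=\tfrac{d(4-d)}{4}+Q+\tfrac14 Q^{2}$. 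Since the $L_{i}$ are strictly decreasing in $i$, I would expand $Q^{2}=L_{1}^{2}+O(L_{1}L_{2})$ and, from $L_{i}'=-\tfrac{L_{i}}{r}\sum_{k\le i}L_{k}$, obtain $P'=-\tfrac{1}{2r}L_{1}^{2}+O\!\bigl(r^{-1}\log^{-2}r\,\log_{2}^{-1}r\bigr)$; adding the coefficients $\tfrac14$ and $\tfrac12$ of $L_{1}^{2}=\log^{-2}r$ produces exactly the term $\tfrac{3}{4r^{2}\log^{2}r}$ and leaves the remainder $O\!\bigl(r^{-2}\log^{-2}r\,\log_{2}^{-1}r\bigr)$, which is \eqref{eq:b_s_expan}. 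For \eqref{eq:b_s_eps_expan} the only change is $f_{\varepsilon}'/f_{\varepsilon}=-P_{\varepsilon}/r$ with $P_{\varepsilon}=P+\tfrac{\varepsilon}{4}L_{s}$; the same identity produces the additional term $\tfrac{\varepsilon L_{s}}{2r^{2}}$, and every remaining contribution (the $Q^{2}$ term, the cross term $\varepsilon L_{s}Q$, the $\varepsilon^{2}L_{s}^{2}$ term, and $P_{\varepsilon}'$) is $O_{\varepsilon}\!\bigl(r^{-2}\log^{-2}r\bigr)$.

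The main obstacle I anticipate is the bookkeeping in the first step: one must justify Taylor expansion with $O(1)$ increments by controlling $f^{(k)}$ uniformly on $[r-1,r+1]$, and verify that the \emph{odd} moments $\sum_{j}(g_{j}^{+}+g_{j}^{-})$ and $\sum_{j}[(g_{j}^{+})^{3}+(g_{j}^{-})^{3}]$—whose naive leading contributions cancel by parity in $n_{j}$—are genuinely of the lower orders stated above. This cancellation is precisely what guarantees that the discrete correction lives at $O(r^{-4})$ rather than at a scale competing with the logarithmic terms being tracked.
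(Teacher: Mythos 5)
Your proposal is correct, but it takes a genuinely different route from the paper's proof. The paper never leaves the discrete setting: it expands the ratios $b^{s}_{n\pm\delta_{j}}/b^{s}_{n}$ directly in powers of $N_{j}=(2n_{j}+1)/|n|^{2}$, proving by induction an expansion of the iterated-logarithm ratios $x_{k}=\log_{k}|n+\delta_{j}|/\log_{k}|n|$ through the recurrence $x_{k}=1+\log(x_{k-1})/\log_{k}|n|$, then multiplies these expansions together and sums over $j$, using the parity identities $N_{j}+N_{j}\big|_{n_{j}\to-n_{j}}=2/|n|^{2}$ and $N_{j}^{2}+N_{j}^{2}\big|_{n_{j}\to-n_{j}}=(8n_{j}^{2}+2)/|n|^{4}$ to assemble the coefficients $\alpha_{k},\beta_{k}$. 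You instead factor the problem into (i) a discretization step, showing that $(\Delta b^{s})_{n}/b^{s}_{n}$ equals the continuous radial expression $f''/f+\tfrac{d-1}{r}f'/f$ at $r=|n|$ up to $O(r^{-4})$, and (ii) a continuous computation driven by the exact identity
\[
\frac{f''}{f}+\frac{d-1}{r}\,\frac{f'}{f}=\frac{P(P+2-d)}{r^{2}}-\frac{P'}{r},
\qquad \frac{f'}{f}=-\frac{P}{r}.
\]
Step (ii) is cleaner and more conceptual than the paper's bookkeeping: the constant $\tfrac{d(4-d)}{4}$, the sum $Q$, the coefficient $\tfrac34=\tfrac14+\tfrac12$, and the $\varepsilon$-term (via the one-line perturbation $P\mapsto P+\tfrac{\varepsilon}{4}L_{s}$) all emerge transparently. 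The price is step (i), which the paper's method avoids entirely: since the increments $g_{j}^{\pm}=|n\pm\delta_{j}|-|n|$ are only $O(1)$, everything hinges on the moment estimates you state, and these do hold as claimed. Indeed, $(2n_{j}+1)^{k}+(1-2n_{j})^{k}$ is even in $n_{j}$, which together with $\sum_{j}n_{j}^{2}=r^{2}$ gives $\sum_{j}(g_{j}^{+}+g_{j}^{-})=\tfrac{d-1}{r}+O(r^{-3})$; the second moment then follows \emph{exactly}, since $(g_{j}^{\pm})^{2}=\pm2n_{j}+1-2rg_{j}^{\pm}$ implies $\sum_{j}[(g_{j}^{+})^{2}+(g_{j}^{-})^{2}]=2d-2r\sum_{j}(g_{j}^{+}+g_{j}^{-})=2+O(r^{-2})$, and the third moment and fourth-order remainder are $O(r^{-1})$ and $O(f\,r^{-4})$ respectively (using $|g_{j}^{\pm}|\leq1$). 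One minor imprecision: you need $\sup_{[r-1,r+1]}|f^{(4)}|=O(f(r)\,r^{-4})$, i.e.\ the bound including the logarithmic factors of $f$; your intermediate bound $O(r^{-d/2-4})$ is weaker, though still sufficient here since it loses only a factor $(\log r)^{s/2}$ against an error budget of order $r^{-2}\log^{-2}r\,\log_{2}^{-1}r$. With these verifications written out, your argument is complete and yields both expansions.
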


\begin{proof}
\emph{Step~0: A~notation.}
It turns out to be advantageous to start with expansions in terms of the quantity
\[
 N_{j}:=\frac{2n_{j}+1}{|n|^{2}},
\]
where $j\in\{1,\dots,d\}$. Notice that $N_{j}=O(1/|n|)$ as $|n|\to\infty$. Further, for $k\in\N_{0}$, we define
\[
 x_{k}:=\frac{\log_{k}|n+\delta_{j}|}{\log_{k}|n|},
\]
where the dependence on $n\in\Z^{d}$ and $j\in\{1,\dots,d\}$ is suppressed in the notation. Whenever needed, the norm of $n\in\Z^{d}$ is assumed to be sufficiently large so the iterated logarithms are well defined, no division by zero occurs, etc. Finally, we will abbreviate 
\[
{\ell}_{k}:=\log_{k}|n|.
\]

\emph{Step~1: Asymptotic expansion of $x_{k}$.}
Notice that, for $k\in\N$, we have the recurrence
\begin{equation}
 x_{k}=1+\frac{\log x_{k-1}}{{\ell}_{k}}.
\label{eq:x_k_recur}
\end{equation}
As the first step, we deduce auxiliary expansions of $x_{k}$ for $|n|$ large. Specifically, we prove that, for $k\in\N$ and $j\in\{1,\dots,d\}$, we have
\begin{equation}
 x_{k}=1+\frac{N_{j}}{2{\ell}_{1}\dots{\ell}_{k}}-\frac{N_{j}^{2}}{4{\ell}_{1}\dots{\ell}_{k}}\left(1+\frac{1}{2}\sum_{r=1}^{k-1}\frac{1}{{\ell}_{1}\dots{\ell}_{r}}\right)+O\left(N_{j}^{3}\right),
\label{eq:x_k_expan}
\end{equation}
as $|n|\to\infty$.

The proof of~\eqref{eq:x_k_expan} proceeds by induction in $k$. If $k=1$, then 
\[
 x_{1}=1+\frac{\log(1+N_{j})}{2{\ell}_{1}}.
\]
Using the elementary expansion
\begin{equation}
 \log(1+X)=X-\frac{X^{2}}{2}+O(X^{3}), \quad X\to0,
\label{eq:log_taylor}
\end{equation}
we find that 
\[
x_{1}=1+\frac{1}{2{\ell}_{1}}\left(N_{j}-\frac{N_{j}^{2}}{2}+O\left(N_{j}^{3}\right)\right)=1+\frac{N_{j}}{2{\ell}_{1}}-\frac{N_{j}^{2}}{4{\ell}_{1}}+O\left(N_{j}^{3}\right), \quad |n|\to\infty,
\]
as claimed.

Suppose $k\geq2$ is fixed and formula~\eqref{eq:x_k_expan} holds for $x_{k-1}$. Then recurrence~\eqref{eq:x_k_recur} and the induction hypothesis imply
\[
 x_{k}=1+\frac{1}{{\ell}_{k}}\log\left(1+\frac{N_{j}}{2{\ell}_{1}\dots{\ell}_{k-1}}-\frac{N_{j}^{2}}{4{\ell}_{1}\dots{\ell}_{k-1}}\left(1+\frac{1}{2}\sum_{r=1}^{k-2}\frac{1}{{\ell}_{1}\dots{\ell}_{r}}\right)+O\left(N_{j}^{3}\right)\right),
\]
as $|n|\to\infty$. Using again~\eqref{eq:log_taylor}, we find 
\begin{align*}
 x_{k}&=1+\frac{1}{{\ell}_{k}}\left[\frac{N_{j}}{2{\ell}_{1}\dots{\ell}_{k-1}}-\frac{N_{j}^{2}}{4{\ell}_{1}\dots{\ell}_{k-1}}\left(1+\frac{1}{2}\sum_{r=1}^{k-2}\frac{1}{{\ell}_{1}\dots{\ell}_{r}}\right)-\frac{N_{j}^{2}}{8{\ell}_{1}^{2}\dots{\ell}_{k-1}^{2}}\right] 
 +O\left(N_{j}^{3}\right) \\
 &=1+\frac{N_{j}}{2{\ell}_{1}\dots{\ell}_{k}}-\frac{N_{j}^{2}}{4{\ell}_{1}\dots{\ell}_{k}}\left(1+\frac{1}{2}\sum_{r=1}^{k-1}\frac{1}{{\ell}_{1}\dots{\ell}_{r}}\right)
 +O\left(N_{j}^{3}\right),
\end{align*}
for $|n|\to\infty$, which concludes the proof of~\eqref{eq:x_k_expan}.

\emph{Step~2: Proof of expansion~\eqref{eq:b_s_expan}.}
Notice that the expression from the left side of~\eqref{eq:b_s_expan} can be expressed in terms of $x_{k}$ as follows:
\begin{equation}
\frac{(\Delta b^{s})_{n}}{b_{n}^{s}}=-2d+\sum_{j=1}^{d}\frac{b_{n+\delta_{j}}^{s}}{b_{n}^{s}}+\frac{b_{n-\delta_{j}}^{s}}{b_{n}^{s}}
=-2d+\sum_{j=1}^{d}\left(x_{0}^{-d/2}\prod_{k=1}^{s}x_{k}^{-1/2}+\mbox{s.c.}\right).
\label{eq:delta_b_aux_inproof}
\end{equation}
Above and hereafter we use the abbreviation $\mbox{s.c.}$ for the same term as the one displayed, where each occurrence of $n_{j}$ is replaced by $-n_{j}$, for brevity.

With the aid of \eqref{eq:x_k_expan} and the expansion
\[
 (1+X)^{-1/2}=1-\frac{X}{2}+\frac{3X^2}{8}+ O(X^3), \quad X\to0,
\]
we deduce, for $k\in\N$, that 
\[
 x_{k}^{-1/2}=1-\alpha_{k}N_{j}+\beta_{k}N_{j}^{2}+O\left(N_{j}^{3}\right), \quad |n|\to\infty,
\]
where 
\begin{equation}
\alpha_{k}=\alpha_{k}({\ell}_{1},\dots,{\ell}_{k}):=\frac{1}{4{\ell}_{1}\dots{\ell}_{k}}
\label{eq:def_alpha_k}
\end{equation}
and
\begin{equation}
\beta_{k}=\beta_{k}({\ell}_{1},\dots,{\ell}_{k}):=\frac{1}{8{\ell}_{1}\dots{\ell}_{k}}\left(1+\frac{1}{2}\sum_{r=1}^{k-1}\frac{1}{{\ell}_{1}\dots{\ell}_{r}}+\frac{3}{4{\ell}_{1}\dots{\ell}_{k}}\right).
\label{eq:def_beta_k}
\end{equation}
Taking the product over $k=1,\dots,s$ yields the formula
\begin{equation}
\prod_{k=1}^{s}x_{k}^{-1/2}=1-\left(\sum_{k=1}^{s}\alpha_{k}\right)N_{j}+\left(\sum_{k=1}^{s}\beta_{k}+\sum_{k=1}^{s}\sum_{l=k+1}^{s}\alpha_{k}\alpha_{l}\right)N_{j}^{2}+O\left(N_{j}^{3}\right)
\label{eq:prod_x_k_part_expan}
\end{equation}
for $|n|\to\infty$.

Similarly, since 
\begin{equation}
(1+X)^{-d/4}=1-\frac{dX}{4}+\frac{d(d+4)X^2}{32}+ O(X^3), \quad X\to0,
\label{eq:elem_expand_d_inproof}
\end{equation}
we readily get
\[
 x_{0}^{-d/2}=\left(1+N_{j}\right)^{-d/4}=1-\frac{dN_{j}}{4}+\frac{d(d+4)N_{j}^2}{32}+ O\left(N_{j}^3\right), \quad |n|\to\infty.
\]
Multiplying the above formula with~\eqref{eq:prod_x_k_part_expan} yields
\begin{align}
x_{0}^{-d/2}\prod_{k=1}^{s}x_{k}^{-1/2}=1&-\left(\frac{d}{4}+\sum_{k=1}^{s}\alpha_{k}\right)N_{j}\nonumber\\
&+\left(\frac{d(d+4)}{32}+\frac{d}{4}\sum_{k=1}^{s}\alpha_{k}+\sum_{k=1}^{s}\beta_{k}+\sum_{k=1}^{s}\sum_{l=k+1}^{s}\alpha_{k}\alpha_{l}\right)N_{j}^{2}+O\left(N_{j}^{3}\right) \label{eq:x_0_prod_x_k_part_expan}
\end{align}
for $|n|\to\infty$. Taking also into account that 
\begin{equation}
 N_{j}+\mbox{s.c.}=\frac{2}{|n|^{2}}
 \quad\mbox{ and }\quad
 N_{j}^{2}+\mbox{s.c.}=\frac{8n_{j}^{2}+2}{|n|^{4}},
\label{eq:N_j_N_j_squared_sum}
\end{equation}
we deduce, for $|n|\to\infty$, the expansion 
\begin{align*}
\sum_{j=1}^{d}\left(x_{0}^{-d/2}\prod_{k=1}^{s}x_{k}^{-1/2}+\mbox{s.c.}\right)&=2d-\left(\frac{d}{4}+\sum_{k=1}^{s}\alpha_{k}\right)\frac{2d}{|n|^{2}}\\
&\hskip-38pt+\left(\frac{d(d+4)}{32}+\frac{d}{4}\sum_{k=1}^{s}\alpha_{k}+\sum_{k=1}^{s}\beta_{k}+\sum_{k=1}^{s}\sum_{l=k+1}^{s}\alpha_{k}\alpha_{l}\right)\frac{8}{|n|^{2}}+O\left(\frac{1}{|n|^{3}}\right),
\end{align*}
where we have also used that $N_{j}=O(1/|n|)$ as $|n|\to\infty$.

Recalling~\eqref{eq:delta_b_aux_inproof}, a slight simplification of the last formula provides us with the expansion
\[
\frac{(\Delta b^{s})_{n}}{b_{n}^{s}}=\left(\frac{d(4-d)}{4}+8\sum_{k=1}^{s}\beta_{k}+8\sum_{k=1}^{s}\sum_{l=k+1}^{s}\alpha_{k}\alpha_{l}\right)\frac{1}{|n|^{2}}+O\left(\frac{1}{|n|^{3}}\right)
\]
for $|n|\to\infty$. Finally, substituting for $\alpha_{k}$ and $\beta_{k}$ from~\eqref{eq:def_alpha_k} and \eqref{eq:def_beta_k}, one finds that the coefficient in front of $1/|n|^{2}$ equals
\[
\frac{d(4-d)}{4}+\sum_{k=1}^{s}\frac{1}{{\ell}_{1}\dots{\ell}_{k}}+\frac{3}{4{\ell}_{1}^{2}}+O\left(\frac{1}{{\ell}_{1}^{2}{\ell}_{2}}\right)
\]
for $|n|\to\infty$. The proof of formula~\eqref{eq:b_s_expan} follows.

\emph{Step~3: Proof of expansion~\eqref{eq:b_s_eps_expan}.}
An analogous expression to~\eqref{eq:delta_b_aux_inproof} for the left hand side of~\eqref{eq:b_s_eps_expan} reads
\[
 \frac{\left(\Delta b^{s}(\varepsilon)\right)_{n}}{b_{n}^{s}(\varepsilon)}
=-2d+\sum_{j=1}^{d}\left[x_{0}^{-d/2}\left(\prod_{k=1}^{s}x_{k}^{-1/2}\right)x_{s}^{-\varepsilon/4}+\mbox{c.s.}\right].
\]
When compared to~\eqref{eq:delta_b_aux_inproof}, one sees that the proof proceeds similarly as in the previous step taking only into account the presence of the additional term $x_{s}^{-\varepsilon/4}$. 

Using~\eqref{eq:elem_expand_d_inproof} with $d$ replaced by $\varepsilon$ together with~\eqref{eq:x_k_expan}, we get 
\[
x_{s}^{-\varepsilon/4}=1-\frac{\varepsilon N_{j}}{8{\ell}_{1}\dots{\ell}_{s}}+\frac{\varepsilon N_{j}^{2}}{16{\ell}_{1}\dots{\ell}_{s}}\left(1+\frac{1}{2}\sum_{r=1}^{s-1}\frac{1}{{\ell}_{1}\dots{\ell}_{r}}+\frac{\varepsilon+4}{8{\ell}_{1}\dots{\ell}_{s}}\right)+O_{\varepsilon}\left(N_{j}^{3}\right),
\]
which, when rewritten in terms of~\eqref{eq:def_alpha_k} and~\eqref{eq:def_beta_k}, reads
\[
x_{s}^{-\varepsilon/4}=1-\frac{\varepsilon\alpha_{s}}{2}\,N_{j}+\frac{4\varepsilon\beta_{s}+\varepsilon(\varepsilon-2)\alpha_{s}^{2}}{8}\,N_{j}^{2}+O_{\varepsilon}\left(N_{j}^{3}\right)
\]
for $|n|\to\infty$. Multiplying the above expansion with~\eqref{eq:x_0_prod_x_k_part_expan} yields
\[
x_{0}^{-d/2}\left(\prod_{k=1}^{s}x_{k}^{-1/2}\right)x_{s}^{-\varepsilon/4}=1-A_{s}N_{j}+B_{s}N_{j}^{2}+O_{\varepsilon}\left(N_{j}^{3}\right)
\]
for $|n|\to\infty$, where
\[
 A_{s}:=\frac{d}{4}+\sum_{k=1}^{s}\alpha_{k}+\frac{\varepsilon\alpha_{s}}{2}
\]
and
\[
B_{s}:=\frac{d(d+4)}{32}+\frac{d}{4}\sum_{k=1}^{s}\alpha_{k}+\sum_{k=1}^{s}\beta_{k}+\sum_{k=1}^{s}\sum_{l=k+1}^{s}\!\alpha_{k}\alpha_{l}+\frac{\varepsilon\alpha_{s}}{2}\left(\frac{d}{4}+\sum_{k=1}^{s}\alpha_{k}+\frac{(\varepsilon-2)\alpha_{s}}{4}\right)+\frac{\varepsilon\beta_{s}}{2}.
\]

By summing up with respect to $j=1,\dots,d$, recalling~\eqref{eq:N_j_N_j_squared_sum}, and using that $N_{j}=O(1/|n|)$ for $|n|\to\infty$, we arrive at the expansion
\[
\sum_{j=1}^{d}\left[x_{0}^{-d/2}\left(\prod_{k=1}^{s}x_{k}^{-1/2}\right)x_{s}^{-\varepsilon/4}+\mbox{c.s.}\right]=2d-\frac{2dA_{s}-8B_{s}}{|n|^{2}}+O_{\varepsilon}\left(\frac{1}{|n|^{3}}\right)
\]
Noticing further that 
\[
B_{s}=\frac{d(d+4)}{32}+\left(\frac{d}{4}+\frac{1}{2}\right)\sum_{k=1}^{s}\alpha_{k}+\left(\frac{d}{4}+\frac{1}{2}\right)\frac{\varepsilon\alpha_{s}}{2}+O_{\varepsilon}\!\left(\frac{1}{{\ell}_{1}^{2}}\right),
\]
we compute
\[
 2dA_{s}-8B_{s}=\frac{d(d-4)}{4}-4\sum_{k=1}^{s}\alpha_{k}-2\varepsilon\alpha_{s}+O_{\varepsilon}\!\left(\frac{1}{{\ell}_{1}^{2}}\right)
\] 
for $|n|\to\infty$. In total, we obtain the expansion 
\[
\frac{\left(\Delta b^{s}(\varepsilon)\right)_{n}}{b_{n}^{s}(\varepsilon)}
=\left(\frac{d(4-d)}{4}+4\sum_{k=1}^{s}\alpha_{k}+2\varepsilon\alpha_{s}\right)\frac{1}{|n|^{2}}+O_{\varepsilon}\!\left(\frac{1}{|n|^{2}{\ell}_{1}^{2}}\right)
\]
for $|n|\to\infty$. Finally, substituting from~\eqref{eq:def_alpha_k}, we infer~\eqref{eq:b_s_eps_expan}.
\end{proof}

As an immediate corollary, we obtain from expansions~\eqref{eq:b_s_expan} and~\eqref{eq:b_s_eps_expan} the following inequalities.

\begin{cor}
 Let $s\in\N$ and $\varepsilon>0$. For all $n\in\Z^{d}$ with $|n|$ sufficiently large, we have inequalities
 \begin{equation}\label{eq:b_s_ineq}
 \frac{(\Delta b^{s})_{n}}{b_{n}^{s}}\geq\frac{d(4-d)}{4|n|^{2}}+\frac{1}{|n|^{2}}\sum_{j=1}^{s}\prod_{k=1}^{j}\frac{1}{\log_{k}|n|}
 \end{equation}
 and
 \begin{equation}\label{eq:b_s_eps_ineq}
  \frac{(\Delta b^{s}(\varepsilon))_{n}}{b_{n}^{s}(\varepsilon)}\leq\frac{d(4-d)}{4|n|^{2}}+\frac{1}{|n|^{2}}\sum_{j=1}^{s}\prod_{k=1}^{j}\frac{1}{\log_{k}|n|}+\frac{\varepsilon}{|n|^{2}}\prod_{k=1}^{s}\frac{1}{\log_{k}|n|},
 \end{equation}
 where sequences $b^{s}(\varepsilon)$ and $b^{s}$ are defined in~\eqref{eq:def_b_s_eps} and~\eqref{eq:def_b_s}.
\end{cor}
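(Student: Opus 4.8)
The plan is to read off both inequalities directly from the asymptotic expansions \eqref{eq:b_s_expan} and \eqref{eq:b_s_eps_expan}, since in each case the explicit leading terms of the relevant expansion coincide exactly with the terms appearing in the target inequality. It then remains only to check that the additional, non-displayed terms carry the correct sign once $|n|$ is sufficiently large. Throughout I use repeatedly that the iterated logarithms $\log_{k}|n|$ tend to $+\infty$ as $|n|\to\infty$.

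For \eqref{eq:b_s_ineq}, I subtract the two explicit leading terms of \eqref{eq:b_s_expan}. What is left is
\[
\frac{3}{4|n|^{2}\log^{2}|n|}+O\!\left(\frac{1}{|n|^{2}\log^{2}|n|\,\log_{2}|n|}\right)
=\frac{1}{|n|^{2}\log^{2}|n|}\left(\frac{3}{4}+O\!\left(\frac{1}{\log_{2}|n|}\right)\right).
\]
As $|n|\to\infty$ the bracketed factor tends to $\tfrac34>0$, so the whole remainder is strictly positive for $|n|$ large, which is precisely \eqref{eq:b_s_ineq}.

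For \eqref{eq:b_s_eps_ineq}, I instead subtract the expansion \eqref{eq:b_s_eps_expan} from the target right-hand side. Since the expansion contributes $\tfrac{\varepsilon}{2}|n|^{-2}\prod_{k=1}^{s}\log_{k}^{-1}|n|$ where the target has $\varepsilon|n|^{-2}\prod_{k=1}^{s}\log_{k}^{-1}|n|$, the difference equals
\[
\frac{1}{|n|^{2}}\left(\frac{\varepsilon}{2}\prod_{k=1}^{s}\frac{1}{\log_{k}|n|}-O_{\varepsilon}\!\left(\frac{1}{\log^{2}|n|}\right)\right),
\]
so it suffices to show that the positive term $\tfrac{\varepsilon}{2}\prod_{k=1}^{s}\log_{k}^{-1}|n|$ eventually dominates the error $O_{\varepsilon}(\log^{-2}|n|)$.

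This order comparison is the only substantive point of the argument, and it hinges on how fast the iterated logarithms grow. Dividing the two quantities gives
\[
\frac{\prod_{k=1}^{s}\log_{k}^{-1}|n|}{\log^{-2}|n|}
=\frac{\log^{2}|n|}{\log|n|\,\log_{2}|n|\cdots\log_{s}|n|}
=\frac{\log|n|}{\log_{2}|n|\cdots\log_{s}|n|}\longrightarrow\infty,
\]
since $\log|n|$ grows faster than any finite product of the higher iterated logarithms $\log_{2}|n|,\dots,\log_{s}|n|$; this holds for every $s\in\N$, with the convention that the denominator is the empty product when $s=1$. Hence for each fixed $\varepsilon>0$ the gap exceeds the error for all large $|n|$, the displayed difference is nonnegative, and \eqref{eq:b_s_eps_ineq} follows. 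Beyond this growth comparison there is no real obstacle; everything else is direct bookkeeping with the already-proven expansions.
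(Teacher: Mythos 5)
Your proposal is correct and is essentially the paper's own argument: the paper derives the corollary directly ("immediately") from the expansions \eqref{eq:b_s_expan} and \eqref{eq:b_s_eps_expan}, and your write-up simply makes explicit the two sign checks this requires. In particular, your key observation that $\log|n|/(\log_{2}|n|\cdots\log_{s}|n|)\to\infty$, so that the gap $\tfrac{\varepsilon}{2}|n|^{-2}\prod_{k=1}^{s}\log_{k}^{-1}|n|$ eventually dominates the $O_{\varepsilon}(|n|^{-2}\log^{-2}|n|)$ remainder, is exactly the point the paper leaves implicit, and you have verified it correctly.
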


\begin{rem}
 If $s=0$, inequality~\eqref{eq:b_s_ineq} does not hold.
\end{rem}

\section{The absence and existence conditions}\label{sec:main}

The main goal of this section is to derive conditions for potential $V$ of the discrete Schrödinger operator $H_{V}$ implying either absence or existence of the zero energy ground state of $H_{V}$. The absence condition is a sufficient condition restricting only the entries of the potential from above for indices with sufficiently large Euclidean norm. The existence condition requires a complementary restriction on the entries of the potential from below and, in addition, a criticality of the Schrödinger operator in question. Analogous results concerning the right-most spectral point will be also derived.

The strategy of proofs relies on the discrete Agmon's comparison principle (Theorem~\ref{thm:comp}). As comparison sequences, $b^{s}(\varepsilon)$ and $b^{s}\equiv b^{s}(0)$ defined by~\eqref{eq:def_b_s_eps} and~\eqref{eq:def_b_s} will be used. It is important to notice that, for any $\varepsilon\geq0$, $s\in\N_{0}$, and $N\geq\ee_{s-1}$, $b^{s}_{n}(\varepsilon)>0$ for all $n\in\Z^{d}_{\geq N}$, and 
\[
b^{s}(\varepsilon)\in\ell^{2}\!\left(\Z^{d}_{\geq N}\right) \;\mbox{ if } \varepsilon>0, \mbox{ but } b^{s}\notin\ell^{2}\!\left(\Z^{d}_{\geq N}\right).
\]

\subsection{The absence condition}\label{subsec:absence}

Recall that the discrete Schrödinger operator $H_{V}$ is said to have a zero energy ground state if and only if $0=\inf\sigma(H_{V})\in\sigma_{\p}(H_{V})$.

\begin{thm}\label{thm:absence_cond}
If there exists $s\in\N_{0}$ such that potential $V$ of the discrete Schrödinger operator $H_{V}$ on $\Z^{d}$ fulfills
\begin{equation}
 V_{n}\leq -2d+\sum_{j=1}^{d}\left(\frac{|n|}{|n+\delta_{j}|}\right)^{d/2}\prod_{i=1}^{s}\sqrt{\frac{\log_{i}|n|}{\log_{i}|n+\delta_{j}|}}+\left(\frac{|n|}{|n-\delta_{j}|}\right)^{d/2}\prod_{i=1}^{s}\sqrt{\frac{\log_{i}|n|}{\log_{i}|n-\delta_{j}|}}
\label{eq:V_cond_gen_absence}
\end{equation}
for all $n\in\Z^{d}$ with $|n|$ sufficiently large, then $H_{V}$ does not have a zero energy ground state, i.e. $0\notin\sigma_{\p}(H_V)$ or $0\neq\inf\sigma(H_V)$. In particular, it is the case if
\begin{equation}
V_{n}\leq \frac{d(4-d)}{4|n|^{2}}+\frac{1}{|n|^{2}}\sum_{j=1}^{s}\prod_{k=1}^{j}\frac{1}{\log_{k}|n|}
\label{eq:V_cond_ineq_absece}
\end{equation}
for all $n\in\Z^{d}$ with $|n|$ sufficiently large.
\end{thm}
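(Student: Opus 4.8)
The plan is to argue by contradiction. First I would record the algebraic identity underpinning everything: by the definition of $b^{s}$ and the computation leading to~\eqref{eq:delta_b_aux_inproof}, the right-hand side of~\eqref{eq:V_cond_gen_absence} equals exactly $(\Delta b^{s})_{n}/b^{s}_{n}$. Therefore~\eqref{eq:V_cond_gen_absence} is equivalent to $V_{n}b^{s}_{n}\le(\Delta b^{s})_{n}$, that is $\left[(H_{V}-0)b^{s}\right]_{n}=(-\Delta b^{s})_{n}+V_{n}b^{s}_{n}\le0$ for all large $|n|$. Fixing $N$ large enough that this holds on $\Z_{\geq N}^{d}$ and that $b^{s}$ is defined there, the hypothesis says precisely that $b^{s}$ is a subsolution of $(H_{V}-\lambda)\psi=0$ with $\lambda=0$ in $\Z_{\geq N}^{d}$.

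Assume now, contrary to the claim, that $H_{V}$ has a zero energy ground state, so that $0=\inf\sigma(H_{V})\in\sigma_{\p}(H_{V})$. Since $\inf\sigma(H_{V})=0>-\infty$ is an eigenvalue, Theorem~\ref{thm:positive} yields a strictly positive eigenvector $\psi\in\ell^{2}(\Z^{d})$ with $H_{V}\psi=0$; in particular $\psi$ is a strictly positive supersolution of $(H_{V}-0)\psi=0$ in $\Z_{\geq N}^{d}$. I would then verify the summability hypothesis~\eqref{eq:agmon_assum} for the subsolution $u=b^{s}$: the logarithmic factors only speed up the decay, so $b^{s}_{n}=O(|n|^{-d/2})$, and since $d/2>(d-2)/2$, Remark~\ref{rem:subsol_decay} applies. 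Theorem~\ref{thm:comp} with $u=b^{s}$, $w=\psi$, and $\lambda=0$ then produces a constant $C>0$ such that $b^{s}_{n}\le C\psi_{n}$ for all $n\in\Z_{\geq N}^{d}$. As $\psi\in\ell^{2}$, this forces $b^{s}\in\ell^{2}(\Z_{\geq N}^{d})$, contradicting the fact recorded at the start of Section~\ref{sec:main} that $b^{s}\notin\ell^{2}(\Z_{\geq N}^{d})$. This contradiction establishes the absence of the zero energy ground state for every $s\in\N_{0}$.

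It remains to derive the ``in particular'' statement, namely that~\eqref{eq:V_cond_ineq_absece} implies~\eqref{eq:V_cond_gen_absence}. For $s\ge1$ this is immediate from inequality~\eqref{eq:b_s_ineq}, which bounds the right-hand side of~\eqref{eq:V_cond_ineq_absece} above by $(\Delta b^{s})_{n}/b^{s}_{n}$, i.e. by the right-hand side of~\eqref{eq:V_cond_gen_absence}. The case $s=0$ is the only delicate point, since~\eqref{eq:b_s_ineq} fails there; I would dispose of it by observing that if $V_{n}\le d(4-d)/(4|n|^{2})$, then, the omitted term being positive, $V_{n}$ also satisfies the $s=1$ form of~\eqref{eq:V_cond_ineq_absece}, so one may replace $s=0$ by $s=1$ and invoke the preceding case.

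The conceptual heart of the argument is the identification of the bound~\eqref{eq:V_cond_gen_absence} with the statement that $b^{s}$ is a subsolution at energy $0$; once this is seen, the proof is a clean contradiction between the non-summability of the comparison function $b^{s}$ and the square-summability of the true ground state $\psi$. The two points demanding genuine care are verifying~\eqref{eq:agmon_assum} and keeping the roles in Theorem~\ref{thm:comp} correctly assigned---$b^{s}$ as the (non-summable) subsolution and $\psi$ as the strictly positive supersolution---so that the comparison $b^{s}_{n}\le C\psi_{n}$ runs in the direction producing the contradiction.
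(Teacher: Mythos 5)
Your proposal is correct and follows essentially the same route as the paper's own proof: identifying~\eqref{eq:V_cond_gen_absence} with $b^{s}$ being a subsolution at energy zero, invoking Theorem~\ref{thm:positive} for a strictly positive eigenvector, applying Theorem~\ref{thm:comp} (with Remark~\ref{rem:subsol_decay} justifying~\eqref{eq:agmon_assum}) to force the contradiction $b^{s}\in\ell^{2}(\Z_{\geq N}^{d})$, and handling the second claim via~\eqref{eq:b_s_ineq} together with the monotonicity in $s$ of the right-hand side of~\eqref{eq:V_cond_ineq_absece} to cover $s=0$. No gaps; even your treatment of the delicate $s=0$ case matches the paper's argument.
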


\begin{proof}
Suppose $V$ satisfies~\eqref{eq:V_cond_gen_absence} for some $s\in\N_{0}$ and all $n\in\Z^{d}_{\geq N}$, where $N$ is a sufficiently large positive integer. Notice that~\eqref{eq:V_cond_gen_absence} is equivalent to the inequality 
\[
 V_{n}\leq\frac{(\Delta b^{s})_{n}}{b_{n}^{s}},
\]
where the sequence $b^{s}$ is defined by~\eqref{eq:def_b_s}. Therefore 
\[
\left(H_{V}b^{s}\right)_{n}=(-\Delta b^{s})_{n}+V_{n}b_{n}^{s}\leq0
\]
for all $n\in\Z_{\geq N}^{d}$,
and so $b^{s}$ is a subsolution of the equation $H_{V}\psi=0$ in $\Z^{d}_{\geq N}$.

For a contradiction, suppose $0=\inf\sigma(H_{V})\in\sigma_{\p}(H_{V})$. Then the eigenvector $\phi\in\ell^{2}(\Z^{d})$ of $H_{V}$ corresponding to the eigenvalue $0$ can be chosen strictly positive by Theorem~\ref{thm:positive}. We apply Theorem~\ref{thm:comp} to vectors $w:=\phi$ and $u:=b^{s}$. {Using Remark~\ref{rem:subsol_decay}, one sees that} all assumptions of Theorem~\ref{thm:comp} are satisfied, indeed. Theorem~\ref{thm:comp} implies the existence of a~constant $C>0$ such that 
\[
 b_{n}^{s}\leq C\phi_{n}
\] 
for all $n\in\Z_{\geq N}^{d}$. It follows, however, that $b^{s}\in\ell^{2}(\Z_{\geq N}^{d})$, which is a contradiction.

The second claim follows readily from inequality~\eqref{eq:b_s_ineq} for $s\in\N$. The case $s=0$ also follows as the right-hand side of~\eqref{eq:V_cond_ineq_absece} is an increasing function of $s$.
\end{proof}

Recall that the spectrum of $H_{0}=-\Delta$ equals $[0,4d]$. Since the spectrum of the unperturbed operator is a compact interval and conditions of Theorem~\ref{thm:absence_cond} concern its left end-point $0$, it is relevant to ask whether there is a similar condition also for the right end-point $4d$. Of course, this question has no continuous counterpart since the classical Laplacian regarded as an operator acting in $L^{2}(\R^{d})$ is not bounded from above. In the discrete setting, such a~complementary condition exists and is analogous to~\eqref{eq:V_cond_gen_absence}.

\begin{thm}\label{thm:absence_cond_right}
If there exists $s\in\N_{0}$ such that potential $V$ of the discrete Schrödinger operator $H_{V}$ on $\Z^{d}$ fulfills
\begin{equation}
 V_{n}\geq 2d-\sum_{j=1}^{d}\left(\frac{|n|}{|n+\delta_{j}|}\right)^{d/2}\prod_{i=1}^{s}\sqrt{\frac{\log_{i}|n|}{\log_{i}|n+\delta_{j}|}}+\left(\frac{|n|}{|n-\delta_{j}|}\right)^{d/2}\prod_{i=1}^{s}\sqrt{\frac{\log_{i}|n|}{\log_{i}|n-\delta_{j}|}}
\label{eq:V_cond_gen_absence_right}
\end{equation}
for all $n\in\Z^{d}$ with $|n|$ sufficiently large, then $4d\notin\sigma_{\p}(H_{V})$ or $4d\neq\sup\sigma(H_{V})$. In particular, it is the case if
\[
V_{n}\geq-\frac{d(4-d)}{4|n|^{2}}-\frac{1}{|n|^{2}}\sum_{j=1}^{s}\prod_{k=1}^{j}\frac{1}{\log_{k}|n|}
\]
for all $n\in\Z^{d}$ with $|n|$ sufficiently large.
\end{thm}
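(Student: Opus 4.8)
The plan is to reduce Theorem~\ref{thm:absence_cond_right} to the already-established Theorem~\ref{thm:absence_cond} by a spectral reflection argument, rather than repeating the entire Agmon machinery. The key observation is that the discrete Laplacian satisfies a symmetry relating its lower and upper band edges: the spectrum of $-\Delta$ is the interval $[0,4d]$, and the reflection $\xi_j\mapsto\xi_j+\pi$ in Fourier space sends the dispersion relation $h(\xi)$ to $4d-h(\xi)$. Concretely, I would introduce the unitary involution $U$ on $\ell^2(\Z^d)$ defined by $(U\psi)_n:=(-1)^{n_1+\dots+n_d}\psi_n$, which implements precisely this reflection. A direct computation gives $U(-\Delta)U^{-1}=4d+\Delta$, equivalently $U(-\Delta)U^{-1}=4d\cdot I-(-\Delta)$.

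First I would compute the conjugate of the full Schrödinger operator. Since $U$ is diagonal, it commutes with the multiplication operator $V$, so
\[
UH_VU^{-1}=U(-\Delta)U^{-1}+V=4d\cdot I-(-\Delta)+V=4d\cdot I-H_{-V}.
\]
This unitary equivalence immediately transfers spectral data: $\lambda\in\sigma_{\p}(H_V)$ if and only if $4d-\lambda\in\sigma_{\p}(H_{-V})$, and $\lambda=\sup\sigma(H_V)$ if and only if $4d-\lambda=\inf\sigma(H_{-V})$. In particular, $4d\in\sigma_{\p}(H_V)$ with $4d=\sup\sigma(H_V)$ holds if and only if $0\in\sigma_{\p}(H_{-V})$ with $0=\inf\sigma(H_{-V})$, i.e. $H_{-V}$ has a zero energy ground state. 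Thus Theorem~\ref{thm:absence_cond_right} for $H_V$ is exactly Theorem~\ref{thm:absence_cond} applied to the operator $H_{-V}$ whose potential is $-V$.

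The final step is to verify that the hypothesis~\eqref{eq:V_cond_gen_absence_right} on $V$ is precisely the hypothesis~\eqref{eq:V_cond_gen_absence} on $-V$. Writing $W:=-V$, condition~\eqref{eq:V_cond_gen_absence} for $W$ reads $W_n\le -2d+\sum_{j=1}^d(\cdots)$, and multiplying through by $-1$ and substituting $W_n=-V_n$ yields exactly $V_n\ge 2d-\sum_{j=1}^d(\cdots)$, which is~\eqref{eq:V_cond_gen_absence_right}. Hence $W=-V$ satisfies~\eqref{eq:V_cond_gen_absence}, so by Theorem~\ref{thm:absence_cond} the operator $H_W=H_{-V}$ has no zero energy ground state, and the unitary equivalence gives the claim for $H_V$. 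The ``in particular'' inequality follows the same way: flipping the sign of both sides of~\eqref{eq:V_cond_ineq_absece} turns it into the stated lower bound on $V$.

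The only genuinely delicate point I expect is verifying the identity $U(-\Delta)U^{-1}=4d\cdot I+\Delta$ at the level of the discrete operator, which one checks most cleanly via the Fourier transform $F$ of Subsection~\ref{subsec:disc_schro}: conjugation by $U$ corresponds to the shift $\xi\mapsto\xi+(\pi,\dots,\pi)$ on the torus, under which each $1-\cos\xi_j$ becomes $1+\cos\xi_j=2-(1-\cos\xi_j)$, so $h(\xi)\mapsto 4d-h(\xi)$. This is a routine but essential verification; everything else is bookkeeping of inequalities and the transfer of spectral quantities under unitary equivalence. No new analytic input beyond Theorem~\ref{thm:absence_cond} is required.
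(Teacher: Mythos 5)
Your proposal is correct and is essentially identical to the paper's own proof: the paper also introduces the involution $(U\psi)_n=(-1)^{n_1+\dots+n_d}\psi_n$, derives $H_{-V}=4d-UH_VU$, transfers the spectral data, and observes that $V$ satisfying~\eqref{eq:V_cond_gen_absence_right} means $-V$ satisfies~\eqref{eq:V_cond_gen_absence}, so Theorem~\ref{thm:absence_cond} applies to $H_{-V}$. The only cosmetic difference is that you verify $U(-\Delta)U^{-1}=4d+\Delta$ via the Fourier shift $\xi\mapsto\xi+(\pi,\dots,\pi)$, while the paper checks it directly from the definitions; both are routine.
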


\begin{proof}
Consider the unitary involution $U$ on $\ell^{2}(\Z^{d})$ defined by the equation
\[
 (U\psi)_{n}:=(-1)^{n_{1}+\dots+n_{d}}\,\psi_{n}
\]
for any $n\in\Z^{d}$. Then, by using the respective definitions, one readily verifies that 
\[
U(-\Delta)U=4d+\Delta,
\]
which implies the relation
\begin{equation}
H_{-V}=4d-UH_{V}U
\label{eq:minus_pot_rel}
\end{equation} 
for any potential $V$.

It follows from~\eqref{eq:minus_pot_rel} that
\[
 \inf\sigma(H_{-V})=4d+\inf-\sigma(H_{V})=4d-\sup\sigma(H_{V})
\]
and
\[
 0\in\sigma_{\p}(H_{-V}) \quad\Longleftrightarrow\quad 4d\in\sigma_{\p}(H_{V}).
\]
Now it suffices to note that, if $V$ fulfils~\eqref{eq:V_cond_gen_absence_right}, then $-V$ satisfies~\eqref{eq:V_cond_gen_absence}, and apply Theorem~\ref{thm:absence_cond} to $H_{-V}$.
\end{proof}

\begin{rem}
Spectral properties of $H_{V}$ are particularly understood if $d=1$ in which case $H_{V}$ is a Jacobi operator. We complement our conditions with  other results. When imposed jointly, the simplest form of conditions~\eqref{eq:V_cond_gen_absence} and~\eqref{eq:V_cond_gen_absence_right} for $s=0$ requires
\begin{equation}
 |V_{n}|\leq 2-\sqrt{\frac{n}{n+1}}-\sqrt{\frac{n}{n-1}}
\label{eq:V_modulus_cond}
\end{equation}
for all $n\in\Z$ of sufficiently large modulus. It follows that $V_{n}=O(1/n^{2})$, as $|n|\to\infty$. So $V$ is a trace class operator which implies that there are no embedded eigenvalues of $H_{V}$ in $(0,4)$. This is a well know discrete analogue to a result of the scattering theory about an asymptotic behavior of the Jost solutions; see for instance~\cite[Eq.~1.17]{gol_ieot21}. Hence the potential~$V$ can produce only eigenvalues in $(-\infty,0]\cup[4,\infty)$. If we restrict the class of potentials even more to non-trivial potentials satisfying 
\begin{equation}
 \sum_{n\in\Z}n^{1+\varepsilon}\,|V_{n}|<\infty \quad\mbox{ and }\quad \sum_{n\in\Z}V_{n}\leq0
\label{eq:sum_weak_coupling}
\end{equation}
for some $\varepsilon>0$, then the weak coupling analysis tells us that there is always a negative eigenvalue in the spectrum of $H_{V}$, see~\cite[Thm.~1.4]{kho-lak-alm_jmaa21} or~\cite[Thm.~A.19]{hoa-hun-ric-vug_ahp23}. Similarly, if we alter the second inequality in~\eqref{eq:sum_weak_coupling} to $\geq0$, there is always an eigenvalue of $H_{V}$ greater than $4$. Hence, in these particular cases, Theorems~\ref{thm:absence_cond} and~\ref{thm:absence_cond_right} do not imply anything new. However, condition~\eqref{eq:sum_weak_coupling} is stronger than~\eqref{eq:V_modulus_cond}. For a~potential that fulfills~\eqref{eq:V_modulus_cond} (or its logarithmic refinements) but not~\eqref{eq:sum_weak_coupling}, Theorems~\ref{thm:absence_cond} and~\ref{thm:absence_cond_right} yield new results even for the simplest $d=1$ case. If, in addition to~\eqref{eq:V_modulus_cond}, $V$ does not produce weakly coupled bound states, i.e., $H_{V}$ enjoys the spectral stability $\sigma(H_{V})=\sigma_{\ess}(H_{V})$, our results imply that the spectrum of $H_{V}$ is purely continuous and fills the interval $[0,4]$. Conditions for spectral stability of discrete Schrödinger operators on $\N$ allowing even complex-valued potentials have been studied recently in~\cite[Sec.~5]{kre-lap-sta_blms22}.
\end{rem}

\subsection{The existence condition}\label{subsec:existence}

First, we define the notion of criticality adapted to the discrete setting. As the discrete Schrödinger operators $H_{V}$ are bounded if and only if $V$ is bounded, in contrast to the continuous case, one may define two kinds of criticality of $H_{V}$ reflecting the fact that the spectrum of $H_{V}$ can have two finite end points. Below letters $V$ and $W$ are generically used for multiplication operators equipped with their maximal domains.

\begin{defn}\label{def:crit}
 Suppose $H_{V}$ is the discrete Schrödinger operator on $\Z^{d}$ which is bounded from below and denote $s_{-}:=\inf\sigma(H_{V})$. Then we call $H_{V}$ to be 
 \begin{enumerate}[i)]
 \item \emph{critical at $s_{-}$ (from below)} if and only if 
  \[
   (\forall W\geq0 \mbox{ compact})(\,\inf\sigma(H_{V}-W)=s_{-} \;\Rightarrow\; W=0),
  \]
 \item \emph{subcritical at $s_{-}$ (from below)} if and only if 
  \[
   (\exists W\geq0 \mbox{ compact})(\inf\sigma(H_{V}-W)=s_{-} \mbox{ and } W\neq0).
  \]
 \end{enumerate}
 Similarly, assuming $H_{V}$ to be bounded from above and $s_{+}:=\sup\sigma(H_{V})$, then $H_{V}$ is called
 \begin{enumerate}[i)]
 \item \emph{critical at $s_{+}$ (from above)} if and only if 
  \[
   (\forall W\geq0 \mbox{ compact})(\,\sup\sigma(H_{V}+W)=s_{+} \;\Rightarrow\; W=0),
  \]
 \item \emph{subcritical at $s_{+}$ (from above)} if and only if 
  \[
   (\exists W\geq0 \mbox{ compact})(\sup\sigma(H_{V}+W)=s_{+} \mbox{ and } W\neq0).
  \]
 \end{enumerate}
\end{defn}

\begin{rem}
Analogously to the continuous case, the discrete Laplacian $H_{0}$ on the lattice $\Z^{d}$ is critical at $0$ for $d=1,2$, which demonstrates the existence of weakly coupled bound states~\cite{kho-lak-alm_jmaa21}, and subcritical at $0$ for $d\geq3$, which follows from the existence of Hardy inequalities~\cite{roz-sol_09, kap-lap_16}. A theory on critical Schrödinger operators on lattices and more general graph structures is discussed in~\cite{kel-pin-pog_cmp18} and~\cite{kel-pin-pog_jst20}.
\end{rem}

Now we are ready to state the condition for the existence of the zero energy ground state of $H_{V}$ on $\Z^{d}$.

\begin{thm}\label{thm:existence_cond}
 Let $\inf\sigma(H_{V})=\inf\sigma_{\ess}(H_{V})=0$ and $H_{V}$ be critical at $0$. If there exist $s\in\N_{0}$ and $\varepsilon>0$ such that 
\begin{align}
V_{n}\geq-2d&+\sum_{j=1}^{d}\left(\frac{|n|}{|n+\delta_{j}|}\right)^{d/2}\left(\prod_{i=1}^{s}\sqrt{\frac{\log_{i}|n|}{\log_{i}|n+\delta_{j}|}}\right)\left(\frac{\log_{s}|n|}{\log_{s}|n+\delta_{j}|}\right)^{\varepsilon}\nonumber\\
&+\sum_{j=1}^{d}\left(\frac{|n|}{|n-\delta_{j}|}\right)^{d/2}\left(\prod_{i=1}^{s}\sqrt{\frac{\log_{i}|n|}{\log_{i}|n-\delta_{j}|}}\right)\left(\frac{\log_{s}|n|}{\log_{s}|n-\delta_{j}|}\right)^{\varepsilon}
\label{eq:V_cond_gen_existence}
\end{align}
for all $n\in\Z^{d}$ with $|n|$ sufficiently large, then $0\in\sigma_{\p}(H_{V})$. In particular, if there exist $s\in\N_{0}$ and $\varepsilon>0$ such that 
\begin{equation}
V_{n}\geq\frac{d(4-d)}{4|n|^{2}}+\frac{1}{|n|^{2}}\sum_{j=1}^{s}\prod_{k=1}^{j}\frac{1}{\log_{k}|n|}+\frac{\varepsilon}{|n|^{2}}\prod_{k=1}^{s}\frac{1}{\log_{k}|n|}
\label{eq:V_cond_ineq_existence}
\end{equation}
for all $n\in\Z^{d}$ with $|n|$ sufficiently large, then $0\in\sigma_{\p}(H_{V})$.
\end{thm}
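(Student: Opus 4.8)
The strategy is to recast both hypotheses as the existence of a strictly positive $\ell^{2}$ supersolution of $H_{V}\psi=0$ at infinity, and then to produce a genuine zero--energy eigenvector by comparing this supersolution, through Theorem~\ref{thm:comp}, against decaying eigenvectors that criticality lets us create by small negative perturbations. First I would note that the general hypothesis~\eqref{eq:V_cond_gen_existence} is exactly the inequality $V_{n}\geq(\Delta b^{s}(4\varepsilon))_{n}/b_{n}^{s}(4\varepsilon)$, whereas the particular hypothesis~\eqref{eq:V_cond_ineq_existence} implies $V_{n}\geq(\Delta b^{s}(\varepsilon))_{n}/b_{n}^{s}(\varepsilon)$ by virtue of~\eqref{eq:b_s_eps_ineq}; in both cases there is some $\varepsilon'>0$ and some $N$ such that $V_{n}\geq(\Delta b^{s}(\varepsilon'))_{n}/b_{n}^{s}(\varepsilon')$ for all $n\in\Z^{d}_{\geq N}$. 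Setting $w:=b^{s}(\varepsilon')$, strict positivity of $w$ turns this into $(H_{V}w)_{n}=(-\Delta w)_{n}+V_{n}w_{n}\geq0$ on $\Z^{d}_{\geq N}$, so $w$ is a strictly positive supersolution of $H_{V}\psi=0$ there, and $w\in\ell^{2}(\Z^{d}_{\geq N})$ because $\varepsilon'>0$.

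Next I would invoke criticality. Let $P$ be the rank--one orthogonal projection onto the site $0$ and, for $t\in(0,1]$, form $H_{V}-tP$. As $tP\geq0$ is compact and nonzero, criticality at $0$ gives $E_{t}:=\inf\sigma(H_{V}-tP)<0$, and since a compact perturbation preserves $\inf\sigma_{\ess}=0$, this $E_{t}$ is a discrete eigenvalue lying below the essential spectrum. By Theorem~\ref{thm:positive} it is simple with a strictly positive eigenvector $\phi_{t}$; being an eigenvector below the essential spectrum, $\phi_{t}$ decays exponentially (Combes--Thomas) and hence satisfies the summability condition~\eqref{eq:agmon_assum}. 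For $n\in\Z^{d}_{\geq N}$, which avoids the support of $P$ once $N\geq1$, we have $(H_{V}\phi_{t})_{n}=E_{t}(\phi_{t})_{n}<0$, so $\phi_{t}$ is a subsolution of $H_{V}\psi=0$ in $\Z^{d}_{\geq N}$.

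I would then normalize $\phi_{t}$ so that $\max\{(\phi_{t})_{n}:N-1\leq|n|<N+1\}=1$ and apply Theorem~\ref{thm:comp} with $u=\phi_{t}$ and the above $w$. The point of this normalization is that the constant permitted in~\eqref{eq:const_C} is then at most $C_{0}:=1/\min\{w_{n}:N-1\leq|n|<N+1\}$, which does not depend on $t$; hence $(\phi_{t})_{n}\leq C_{0}w_{n}$ on $\Z^{d}_{\geq N}$ uniformly in $t$. A Harnack inequality with $t$--uniform constants, legitimate since $E_{t}\in[-1,0]$ and $V$ is fixed on the relevant finite set, bounds $\phi_{t}$ uniformly on $\{|n|\leq N+1\}$ as well, so $\{\phi_{t}\}$ is bounded at every lattice site. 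A diagonal extraction then yields $t_{k}\to0$ with $\phi_{t_{k}}\to\phi$ pointwise. Since each component of $H_{V}$ involves only finitely many values, passing to the limit in $(H_{V}-t_{k}P)\phi_{t_{k}}=E_{t_{k}}\phi_{t_{k}}$ site by site gives $H_{V}\phi=0$; the uniform bound gives $\phi\in\ell^{2}(\Z^{d})$; and because the normalizing maximum is attained on the finite shell, along a further subsequence $\phi(n^{\ast})=1$ for a fixed $n^{\ast}$, so $\phi\neq0$. Therefore $0\in\sigma_{\p}(H_{V})$.

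The crux, and the step I expect to be most delicate, is the passage to the limit: one must pick a normalization that simultaneously keeps the comparison constant in~\eqref{eq:const_C} uniform in $t$ and prevents the family $\{\phi_{t}\}$ from concentrating at infinity, so that the pointwise limit is a nontrivial $\ell^{2}$ solution rather than the zero function. Fixing the maximum of $\phi_{t}$ on the shell $N-1\leq|n|<N+1$ does both at once, reducing the problem to the $t$--uniformity of the interior Harnack bound and to the exponential decay needed to invoke~\eqref{eq:agmon_assum}; the remaining ingredient, the equivalence of the hypotheses with the supersolution inequality, is a routine computation already encoded in~\eqref{eq:b_s_eps_ineq} and the definition~\eqref{eq:def_b_s_eps}.
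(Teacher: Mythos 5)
Your proposal is correct and follows the same core strategy as the paper: perturb by a small multiple of the rank-one projection at the origin, use criticality to produce strictly negative discrete eigenvalues $E_{t}\in[-t,0)$ with strictly positive eigenvectors (Theorem~\ref{thm:positive}), recognize \eqref{eq:V_cond_gen_existence} as $V_{n}\geq(\Delta b^{s}(4\varepsilon))_{n}/b^{s}_{n}(4\varepsilon)$ so that $b^{s}(4\varepsilon)\in\ell^{2}(\Z^{d}_{\geq N})$ is a strictly positive supersolution, apply Theorem~\ref{thm:comp} with a $t$-independent constant, and pass to the limit $t\to0$. Where you diverge is the limiting procedure. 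The paper normalizes $\|\phi^{k}\|=1$, which makes $|\phi^{k}_{n}|\leq1$ everywhere at once; this simultaneously gives the uniform constant in \eqref{eq:const_C}, controls the interior without any Harnack argument, and lets dominated convergence (domination by $C b^{s}(\delta)$ outside a finite set) upgrade the weak limit to a strong limit with $\|\phi\|=1$, so nontriviality is free. You instead normalize the maximum on the shell $N-1\leq|n|<N+1$ to be $1$, which also makes \eqref{eq:const_C} uniform, but then forces you to add two ingredients the paper avoids: a $t$-uniform Harnack chain to bound $\phi_{t}$ on $\{|n|<N-1\}$, and a pigeonhole argument on the finite shell to prevent the pointwise limit from vanishing. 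Both ingredients are sound — the Harnack step follows from positivity and the equation itself, since $\sum_{|m-n|=1}(\phi_{t})_{m}=(2d+V_{n}-t\delta_{n0}-E_{t})(\phi_{t})_{n}$ bounds each neighbour by $(2d+V_{n}+1)(\phi_{t})_{n}$ uniformly in $t\in(0,1]$, and a finite chain of such steps covers the interior — but you should spell this out rather than cite it as a black box. Your approach buys a more hands-on, purely pointwise argument with no functional-analytic compactness; the paper's buys brevity and in addition yields strong $\ell^{2}$ convergence of the approximating eigenvectors.

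Three small repairs. First, Combes--Thomas is unnecessary: $\phi_{t}\in\ell^{2}(\Z^{d})$ already implies \eqref{eq:agmon_assum}, since by Cauchy--Schwarz $\sum_{M\leq|n|\leq\alpha M}\sum_{j}|u_{n}u_{n-\delta_{j}}|\leq d\,\|u\|^{2}$, and even the tail $\sum_{|n|\geq M}$ tends to $0$. Second, your reduction of \eqref{eq:V_cond_ineq_existence} to the supersolution inequality cites \eqref{eq:b_s_eps_ineq}, which the paper proves only for $s\in\N$; the case $s=0$ needs a separate remark (as in the paper: the $s=0$ hypothesis implies the $s=1$ hypothesis for $|n|$ large, since $\varepsilon/|n|^{2}$ dominates $(1+\varepsilon')/(|n|^{2}\log|n|)$). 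Third, to conclude $0\in\sigma_{\p}(H_{V})$ you must check $\phi\in\Dom V$; this is immediate because the pointwise equation gives $V\phi=\Delta\phi\in\ell^{2}(\Z^{d})$, but it should be stated.
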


\begin{proof}
 Let us denote by $\delta_{0}$ the Dirac delta potential which acts on $\ell^{2}(\Z^{d})$ as 
 \[ 
 \left(\delta_{0}\psi\right)_{n}=\begin{cases}
 \psi_{0} &\quad\mbox{ if } n=0,\\
 0 &\quad\mbox{ if } n\neq0.
 \end{cases}
 \]
 Suppose $H_{V}$ satisfies the assumptions. We define the auxiliary sequence of potentials
 \[
  V^{k}:=V-\frac{1}{k}\delta_{0}, \quad k\in\N. 
 \]
 Then $H_{V^k}=H_{V}-\delta_{0}/k$ and $\Dom H_{V^k}=\Dom H_{V}=\Dom V$. Since $H_{V}$ is critical at $0$, $\inf\sigma(H_{V^k})<0$ for all $k\in\N$. Moreover, $\sigma_{\ess}(H_{V^k})=\sigma_{\ess}(H_{V})\subset[0,\infty)$ because $\delta_{0}$ is a~rank one operator and hence $H_{V^k}-H_{V}$ is compact. It follows that
\[
E_{k}:=\inf\sigma\left(H_{V^k}\right)
\]
is a~discrete eigenvalue of $H_{V^k}$. By the min-max principle, $-1/k\leq E_{k}<0$, therefore $E_{k}\to 0$ as $k\to\infty$.
Let us denote by $\phi^{k}$ the \emph{normalized} eigenvector of $H_{V^k}$ corresponding to the eigenvalue $E_{k}$. By Theorem~\ref{thm:positive}, we may assume $\phi^{k}$ to be strictly positive.
 
As the unit ball in any reflexive Banach space is weakly precompact, $\{\phi^{k}\}_{k=1}^{\infty}$ contains a~weakly convergent subsequence which we again denote by $\{\phi^{k}\}_{k=1}^{\infty}$ with some abuse of the notation. Notice that the weak convergence in $\ell^{2}(\Z^{d})$ means nothing but the point-wise convergence. In the course of the proof, we will show that, under the assumption~\eqref{eq:V_cond_gen_existence}, $\{\phi^{k}\}_{k=1}^{\infty}$ converges even strongly, i.e. in the norm of $\ell^{2}(\Z^{d})$. To this end, it suffices to verify that $\|\phi\|=1$, where $\phi$ is the weak limit of $\{\phi^{k}\}_{k=1}^{\infty}$.

Condition~\eqref{eq:V_cond_gen_existence} is equivalent to the inequality 
\[
 V_{n}\geq\frac{(\Delta b^{s}(\delta))_{n}}{b^{s}(\delta)_{n}},
\]
where $\delta:=4\varepsilon$ and sequence $b^{s}(\delta)$ is defined by~\eqref{eq:def_b_s_eps}.
In other words, $\left[H_{V}b^{s}(\delta)\right]_{n}\geq0$ for all $n\in\Z_{\geq N}^{d}$, where $N$ is a positive integer. Taking also into account that $\left[\delta_{0}b^{s}(\delta)\right]_{n}=0$ for $n\neq0$, we get
\[
 \left[\left(H_{V^k}-E_{k}\right)b^{s}(\delta)\right]_{n}=\left[H_{V}b^{s}(\delta)\right]_{n}-E_{k}b^{s}_{n}(\delta)\geq-E_{k}b^{s}_{n}(\delta)>0,
\]
for all $n\in\Z_{\geq N}^{d}$. Hence $b^{s}(\delta)$ is a strictly positive supersolution of the equation $(H_{V^{k}}-E_{k})\psi=0$ in $\Z_{\geq N}^{d}$. 

Fix $N\in\N$ such that assumption~\eqref{eq:V_cond_gen_existence} holds for all $n\in\Z^{d}_{\geq N}$.
For any $k\in\N$, we may apply Theorem~\ref{thm:comp} to $w:=b^{s}(\delta)$ and $u:=\phi^k$, which implies
\[
 \phi_{n}^{k}\leq C_{k} b^{s}_{n}(\delta).
\]
for all $n\in\Z_{\geq N}^{d}$, with the constant $C_{k}>0$ chosen as
\[
 C_{k}:=\max_{N-1\leq|n|<N+1}\frac{\phi_{n}^{k}}{b_{n}^{s}(\delta)}.
\]
By the normalization and positivity of all $\phi^{k}$, we have $0\leq \phi_{n}^{k}\leq1$ for all $k\in\N$ and $n\in\Z^{d}$. Then 
\[
 C_{k}\leq C:=\max_{N-1\leq|n|<N+1}\frac{1}{b_{n}^{s}(\delta)}
\]
for all $k\in\N$ and we obtain the estimate 
\[
 \phi_{n}^{k}\leq C b^{s}_{n}(\delta)
\]
valid for all $n\in\Z_{\geq N}^{d}$ with the $k$-independent constant $C$. Since $b^{s}(\delta)\in\ell^{2}(\Z_{\geq M}^{d})$ it follows
\[
 \|\phi\|^{2}=\sum_{n\in\Z^{d}}|\phi_{n}|^{2}=\lim_{k\to\infty}\sum_{n\in\Z^{d}}\left|\phi_{n}^{k}\right|^{2}=1
\]
by the Lebesgue dominated convergence. Thus, $\phi^{k}\to\phi$ in the norm of $\ell^{2}(\Z^{d})$.

To finish the proof of the first claim we show that vector $\phi$ is an eigenvector of $H_{V}$ corresponding to the eigenvalue $0$. Clearly, $0\neq\phi\in\ell^{2}(\Z^{d})$ since $\|\phi\|=1$. So we are done once we show that $\phi\in\Dom V$ and, as $k\to\infty$,
\[
\left(H_{V^{k}}-E_{k}\right)\phi^{k}\to H_{V}\phi
\]
in $\ell^{2}(\Z^{d})$. Since $E_{k}\to0$, $\delta_{0}$ is bounded, and $\phi^{k}$ are uniformly bounded, both $k^{-1}\delta_{0}\phi^{k}$ and $E_{k}\phi^{k}$ tend to $0$ in $\ell^{2}(\Z^{d})$. Further, $\Delta$ is bounded and so $\Delta\phi^{k}\to\Delta\phi$ in $\ell^{2}(\Z)$. Thus, it suffices to show that $\phi\in\Dom V$ and $V\phi^{k}\to\ V\phi$ in $\ell^{2}(\Z^{d})$. This is true if $V\phi^{k}$ converges in $\ell^{2}(\Z^{d})$ because $V$ is a closed operator on its maximal domain. Finally, the last assertion is true indeed, because
\[
 V\phi^{k}=\Delta\phi^{k}+\frac{1}{k}\delta_{0}\phi^{k}+E_{k}\phi^{k}\to\Delta\phi
\]
in $\ell^{2}(\Z^{d})$.

To verify the second claim for $s\in\N$ it suffices to note that, if $V$ satisfies~\eqref{eq:V_cond_ineq_existence}, then $V$ fulfils~\eqref{eq:V_cond_gen_existence} with $\varepsilon$ replaced by $\varepsilon/4$ by inequality~\eqref{eq:b_s_eps_ineq}. Factor $1/4$ is of course inessential and we may apply the already proven first claim. Finally, the case $s=0$ is also covered since, if condition~\eqref{eq:V_cond_ineq_existence} is true for $s=0$, then it is true also for $s=1$ and all $n\in\Z^{d}$ with $|n|$ sufficiently large.
\end{proof}

\begin{rem}
It is not obvious that potentials satisfying assumptions of Theorem~\ref{thm:existence_cond} really exist. This is demonstrated by an example in Section~\ref{subsec:example}.
\end{rem}

The variant of Theorem~\ref{thm:existence_cond} for the right spectral end-point can be proven in a similar fashion as is Theorem~\ref{thm:absence_cond_right} deduced from Theorem~\ref{thm:absence_cond}. Note that it follows from equation~\eqref{eq:minus_pot_rel} that $H_{V}$ is critical at $4d$ from above if and only if $H_{-V}$ is critical at $0$ from below.

\begin{thm}\label{thm:existence_cond_right}
 Let $\sup\sigma(H_{V})=\sup\sigma_{\ess}(H_{V})=4d$ and $H_{V}$ be critical at $4d$ from above. If there exist $s\in\N_{0}$ and $\varepsilon>0$ such that 
\begin{align*}
V_{n}\leq2d&-\sum_{j=1}^{d}\left(\frac{|n|}{|n+\delta_{j}|}\right)^{d/2}\left(\prod_{i=1}^{s}\sqrt{\frac{\log_{i}|n|}{\log_{i}|n+\delta_{j}|}}\right)\left(\frac{\log_{s}|n|}{\log_{s}|n+\delta_{j}|}\right)^{\varepsilon}\\
&-\sum_{j=1}^{d}\left(\frac{|n|}{|n-\delta_{j}|}\right)^{d/2}\left(\prod_{i=1}^{s}\sqrt{\frac{\log_{i}|n|}{\log_{i}|n-\delta_{j}|}}\right)\left(\frac{\log_{s}|n|}{\log_{s}|n-\delta_{j}|}\right)^{\varepsilon}
\end{align*}
for all $n\in\Z^{d}$ with $|n|$ sufficiently large, then $4d\in\sigma_{\p}(H_{V})$. In particular, if there exist $s\in\N_{0}$ and $\varepsilon>0$ such that 
\[
V_{n}\leq-\frac{d(4-d)}{4|n|^{2}}-\frac{1}{|n|^{2}}\sum_{j=1}^{s}\prod_{k=1}^{j}\frac{1}{\log_{k}|n|}-\frac{\varepsilon}{|n|^{2}}\prod_{k=1}^{s}\frac{1}{\log_{k}|n|}
\]
for all $n\in\Z^{d}$ with $|n|$ sufficiently large, then $4d\in\sigma_{\p}(H_{V})$.
\end{thm}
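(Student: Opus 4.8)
The plan is to reduce Theorem~\ref{thm:existence_cond_right} to the already established Theorem~\ref{thm:existence_cond} by means of the spectral reflection $U$, in exactly the same manner in which Theorem~\ref{thm:absence_cond_right} was deduced from Theorem~\ref{thm:absence_cond}. Recall the unitary involution $U$ on $\ell^{2}(\Z^{d})$ given by $(U\psi)_{n}=(-1)^{n_{1}+\dots+n_{d}}\psi_{n}$ together with the identity~\eqref{eq:minus_pot_rel}, namely $H_{-V}=4d-UH_{V}U$. Since $U$ is unitary, conjugation by $U$ leaves the full spectrum, the essential spectrum, and the point spectrum invariant, while the affine map $\lambda\mapsto 4d-\lambda$ interchanges suprema with infima. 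Consequently $\inf\sigma(H_{-V})=4d-\sup\sigma(H_{V})$, $\inf\sigma_{\ess}(H_{-V})=4d-\sup\sigma_{\ess}(H_{V})$, and $0\in\sigma_{\p}(H_{-V})\Longleftrightarrow 4d\in\sigma_{\p}(H_{V})$.

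First I would translate each hypothesis of the theorem into the corresponding hypothesis of Theorem~\ref{thm:existence_cond} for the operator $H_{-V}$. The assumption $\sup\sigma(H_{V})=\sup\sigma_{\ess}(H_{V})=4d$ becomes $\inf\sigma(H_{-V})=\inf\sigma_{\ess}(H_{-V})=0$ by the reflection above. The criticality of $H_{V}$ at $4d$ from above is equivalent to the criticality of $H_{-V}$ at $0$ from below; this is immediate from~\eqref{eq:minus_pot_rel}, as already noted in the remark preceding the theorem, because adding a compact $W\geq0$ to $H_{V}$ at the top corresponds under $U$ to subtracting the same $W$ from $H_{-V}$ at the bottom, and $U$ preserves both compactness and positivity.

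Next I would verify that the potential inequality in the statement, when multiplied by $-1$, is precisely~\eqref{eq:V_cond_gen_existence} written for the potential $-V$. Indeed, the displayed upper bound $V_{n}\leq 2d-(\cdots)$ is equivalent to $(-V)_{n}\geq -2d+(\cdots)$, whose right-hand side coincides term by term with the lower bound~\eqref{eq:V_cond_gen_existence}. With all hypotheses confirmed, Theorem~\ref{thm:existence_cond} applied to $H_{-V}$ yields $0\in\sigma_{\p}(H_{-V})$, and the point-spectrum correspondence gives $4d\in\sigma_{\p}(H_{V})$, establishing the first claim. For the \emph{in particular} part, the same sign flip turns the simplified upper bound on $V_{n}$ into~\eqref{eq:V_cond_ineq_existence} for $-V$, so the second assertion of Theorem~\ref{thm:existence_cond} applies verbatim.

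Since the entire argument is a transcription through $U$, I expect no genuine analytic obstacle. The only points demanding care are the bookkeeping of which endpoint maps to which under $\lambda\mapsto 4d-\lambda$, and the transfer of criticality from above to criticality from below. The latter is the single step where one must be attentive to the direction of the perturbation---adding $W\geq0$ at the top versus subtracting it at the bottom---but it follows directly from~\eqref{eq:minus_pot_rel}.
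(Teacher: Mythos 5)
Your proposal is correct and is exactly the paper's intended argument: the paper proves Theorem~\ref{thm:existence_cond_right} by reducing it to Theorem~\ref{thm:existence_cond} via the involution $U$ and the identity~\eqref{eq:minus_pot_rel}, just as Theorem~\ref{thm:absence_cond_right} is deduced from Theorem~\ref{thm:absence_cond}, including the observation that criticality at $4d$ from above for $H_{V}$ is equivalent to criticality at $0$ from below for $H_{-V}$. Your write-up in fact supplies slightly more detail (the endpoint bookkeeping and the sign flip of the potential inequality) than the paper, which only sketches this reduction.
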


\subsection{An example}\label{subsec:example}

For a parameter $\gamma>0$, we consider potential $V=V(\gamma)$ defined by 
\begin{equation}
 V_{n}:=\frac{(\Delta a)_{n}}{a_{n}}, \quad n\in\Z^{d},
\label{eq:def_V_gamma_a}
\end{equation}
where
\begin{equation}
 a_{n}:=\begin{cases}
  |n|^{-\gamma}& \quad\mbox{ if } n\neq0,\\
  1& \quad\mbox{ if } n=0.\\
 \end{cases}
\label{eq:def_a_gamma}
\end{equation}
Explicitly, the entries of $V$ read
\[
 V_{n}=\begin{cases}
  0 &\mbox { if } n=0,\\
  1+2^{-\gamma}+(d-1)2^{1-\gamma/2}-2d &\mbox { if } n=\pm\delta_{i},\, i\in\{1,\dots,d\},\\
 -2d+{\displaystyle \sum_{j=1}^{d}\frac{|n|^{\gamma}}{|n+\delta_{j}|^{\gamma}}+\frac{|n|^{\gamma}}{|n-\delta_{j}|^{\gamma}}} &\mbox { otherwise. }\\
 \end{cases}
\]

The existence or non-existence of the zero energy ground state of the Schrödinger operator $H_{V}=-\Delta+V$ depend on the value of parameter $\gamma$, which shows the following proposition.

\begin{prop}
Let $\gamma>0$ and $H_{V}$ be the discrete Schrödinger operator with potential defined by~\eqref{eq:def_V_gamma_a} and~\eqref{eq:def_a_gamma}. Then one has:
\begin{enumerate}[i)]\setlength\itemsep{2pt}
\item $\sigma_{\ess}(H_{V})=[0,4d]$,
\item $H_{V}$ is bonded and non-negative,
\item $H_{V}$ is critical at $0$ for $\gamma>(d-1)/2$,
\item $0\in\sigma_{\p}(H_{V})$ if and only if $\gamma>d/2$.
\end{enumerate}
\end{prop}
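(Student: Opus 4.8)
The plan is to build everything on the single observation that, by the very definition~\eqref{eq:def_V_gamma_a} of $V$, the sequence $a$ is a global positive solution of $H_V\psi=0$: indeed $(H_Va)_n=(-\Delta a)_n+V_na_n=(-\Delta a)_n+(\Delta a)_n=0$ for all $n\in\Z^d$, with $a_n>0$. First I would record the asymptotics of the potential. Writing $\bigl(|n|/|n\pm\delta_j|\bigr)^{\gamma}=\bigl(1+(\pm2n_j+1)/|n|^{2}\bigr)^{-\gamma/2}$ and Taylor-expanding exactly as in the proof of the $b^{s}$-expansion above, one finds $V_n=\gamma(\gamma+2-d)\,|n|^{-2}+O(|n|^{-3})$ as $|n|\to\infty$; in particular $V_n\to0$. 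Together with the finitely many (finite) exceptional values this shows that $V$ is a bounded multiplication operator, whence $H_V=-\Delta+V$ is bounded, giving the first half of~(ii). Moreover a diagonal operator whose entries vanish at infinity is compact, so $V$ is compact and Weyl's theorem yields $\sigma_{\ess}(H_V)=\sigma_{\ess}(-\Delta)=\sigma(-\Delta)=[0,4d]$, which is~(i).

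For the non-negativity in~(ii) I would use the \emph{ground state representation} induced by the positive solution $a$. A direct summation by parts, in which the cross terms recombine precisely because $V_na_n=(\Delta a)_n$, shows that for every $\psi\in C_c(\Z^d)$
\[
\langle\psi,H_V\psi\rangle=\frac{1}{2}\sum_{|n-m|=1}a_na_m\left(\frac{\psi_n}{a_n}-\frac{\psi_m}{a_m}\right)^{2}\geq0 .
\]
Since $H_V$ is bounded and $C_c(\Z^d)$ is dense, this gives $H_V\geq0$. This identity, realising $H_V$ as the energy form of the weighted graph with conductances $c_{nm}=a_na_m\asymp|n|^{-2\gamma}$, is the main tool for the two remaining parts.

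Claim~(iv) then splits into a trivial and a structural part. Since $a\in\ell^{2}(\Z^d)$ if and only if $\sum_n|n|^{-2\gamma}<\infty$, i.e.\ if and only if $\gamma>d/2$, the implication $\gamma>d/2\Rightarrow0\in\sigma_{\p}(H_V)$ is immediate, as $a$ is then an $\ell^{2}$ eigenvector at $0$. For the converse, suppose $0\in\sigma_{\p}(H_V)$. By~(i) and~(ii), $0=\inf\sigma(H_V)$, so $0$ is the ground-state energy and Theorem~\ref{thm:positive} provides a \emph{strictly} positive eigenvector $\phi\in\ell^{2}(\Z^d)$. Being non-negative with the positive solution $a$, the operator $H_V$ is either critical or subcritical at $0$. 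If it were subcritical, Definition~\ref{def:crit} would furnish a compact $W\geq0$, $W\neq0$, with $H_V\geq W$; but then $0=\langle\phi,H_V\phi\rangle\geq\langle\phi,W\phi\rangle>0$ because $\phi$ is strictly positive, a contradiction. Hence $H_V$ is critical, so (by the discrete criticality theory of~\cite{kel-pin-pog_cmp18}) its positive solution is unique up to a multiplicative constant, forcing $\phi=c\,a$ for some $c>0$. Consequently $a\in\ell^{2}(\Z^d)$ and $\gamma>d/2$.

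Finally, for the criticality statement~(iii) I would verify that $H_V$ admits a \emph{null sequence}, the standard characterisation of criticality of a non-negative graph Schrödinger operator. In view of the ground state representation it suffices to produce $\eta_k\in C_c(\Z^d)$ with $\eta_k\to1$ pointwise and $\tfrac12\sum_{|n-m|=1}a_na_m(\eta_{k,n}-\eta_{k,m})^{2}\to0$. Taking radial cut-offs equal to $1$ on the ball $B_{L_k}$ and decaying linearly to $0$ on the annulus $L_k\leq|n|\leq2L_k$ (with $L_k\to\infty$), and using $a_na_m\asymp|n|^{-2\gamma}$ together with the $O(M^{d-1})$ bound on the number of lattice points on the sphere of radius $M$, the energy of $\eta_k$ is $O\bigl(L_k^{\,d-2-2\gamma}\bigr)$, which tends to $0$ in the stated range and exhibits the ground-state-transformed weighted graph as recurrent, equivalently $H_V$ as critical. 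I expect the main obstacle to lie here: one must invoke the equivalence, from~\cite{kel-pin-pog_cmp18}, between criticality in the sense of Definition~\ref{def:crit}, uniqueness of the positive solution, and recurrence/null-sequences, and one must track the threshold carefully in the radial energy estimate. By contrast, the non-negativity and essential-spectrum parts are routine once the ground state representation and the asymptotics $V_n\to0$ are in hand; as an alternative to the uniqueness input in~(iv) one could instead compare $a$ with $\phi$ directly through Theorem~\ref{thm:comp}, whose summability hypothesis~\eqref{eq:agmon_assum} for $u=a$ is satisfied exactly when $\gamma>(d-2)/2$.
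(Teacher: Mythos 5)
Your proposal is correct, and for parts (i) and (ii) it coincides with the paper's own proof (same compactness/Weyl argument, same ground-state representation $\langle\psi,H_V\psi\rangle=\sum_{j}\sum_{n}a_na_{n-\delta_j}\bigl(\psi_n/a_n-\psi_{n-\delta_j}/a_{n-\delta_j}\bigr)^2$). For (iii) and (iv) you take a genuinely different route. The paper proves (iii) with no criticality theory at all: it tests a putative compact minorant $W\geq0$ against the \emph{sharply} truncated functions $a\,\chi_{[-N,N]^d}$, so the energy concentrates on the boundary of the box, giving $O(N^{d-1-2\gamma})$ and the threshold $\gamma>(d-1)/2$, after which a limiting argument forces $W=0$. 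Your linearly interpolating cut-offs give the better bound $O(L^{d-2-2\gamma})$, hence criticality already for $\gamma>(d-2)/2$ (a strictly larger range, essentially optimal since the $a$-transformed network is transient for $\gamma<(d-2)/2$); but you then route the conclusion through the null-sequence characterisation of~\cite{kel-pin-pog_cmp18}, and to use it one must also check that criticality in the sense of Definition~\ref{def:crit} (compact $W$) agrees with the form-theoretic notion there (arbitrary $w\geq0$) --- easy, since any nonzero $w\geq0$ with $H_V\geq w$ dominates a nonzero rank-one minorant, but it should be said; alternatively your cut-offs can be fed directly into the paper's test-function-plus-Fatou argument, making (iii) self-contained. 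For the converse of (iv) the paper compares $V_n=\gamma(\gamma+2-d)|n|^{-2}+O(|n|^{-3})$ with the absence criterion of Theorem~\ref{thm:absence_cond}; note that this comparison needs $\gamma(\gamma+2-d)\leq d(4-d)/4$, i.e. $\gamma\geq d/2-2$, which is automatic only for $d\leq4$. Your argument --- a strictly positive zero-energy eigenvector rules out subcriticality (else $0=\langle\phi,H_V\phi\rangle\geq\langle\phi,W\phi\rangle>0$), and criticality plus uniqueness of positive solutions forces $\phi=c\,a$, hence $a\in\ell^2$ and $\gamma>d/2$ --- is uniform in $d$ and $\gamma$ and arguably cleaner; its only cost is importing the uniqueness theorem for positive solutions of critical operators from~\cite{kel-pin-pog_cmp18} or~\cite{kel-pin-pog_jst20}, whereas your Agmon-based fallback via Theorem~\ref{thm:comp} (valid for $\gamma>(d-2)/2$) stays entirely within the paper's toolkit and mirrors its proof of Theorem~\ref{thm:absence_cond}.
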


\begin{proof}
i) 
A straightforward computation shows that
\begin{equation}
 V_{n}=\frac{\gamma(\gamma+2-d)}{|n|^{2}}+O\left(\frac{1}{|n|^{3}}\right), \quad |n|\to\infty.
\label{eq:V_n_asympt}
\end{equation}
It follows that $V$ is a compact operator and therefore 
$\sigma_{\ess}(H_{V})=\sigma_{\ess}(-\Delta)=[0,4d]$ by the Weyl criterion.

ii) 
The boundedness of $H_{V}$ follows from boundedness of $V$ which, in its turn, is obvious from~\eqref{eq:V_n_asympt}.
 
Next, we prove that $H_{V}\geq0$. Since $H_{V}$ is bounded it suffices to verify that $\langle \psi,H_{V}\psi\rangle\geq0$ for all compactly supported sequences $\psi\in\ell^{2}(\Z^{d})$. In addition, we can assume that $\psi$ is real without loss of generality. Define the auxiliary sequence $\phi$ by putting $\phi_{n}:=\psi_{n}/a_{n}$ for all $n\in\Z^{d}$, where the positive sequence $a$ is given by~\eqref{eq:def_a_gamma}. Then using~\eqref{eq:delta_parc_dif} and~\eqref{eq:def_V_gamma_a}, we find
\begin{align*}
 \langle \psi,H_{V}\psi\rangle &= \sum_{j=1}^{d}\|D_{j}(a\phi)\|^{2}+\langle a\phi,Va\phi\rangle\\ 
   &= \sum_{j=1}^{d}\sum_{n\in\Z^{d}}(a_{n}\phi_{n}-a_{n-\delta_j}\phi_{n-\delta_j})^{2}-2d\sum_{n\in\Z^{d}}a_{n}^{2}\phi_{n}^{2}+\sum_{n\in\Z^{d}}a_{n}\phi_{n}^{2}\sum_{j=1}^{d}\left(a_{n+\delta_{j}}+a_{n-\delta_{j}}\right)\\
  &=-2\sum_{j=1}^{d}\sum_{n\in\Z^{d}}a_{n}a_{n-\delta_j}\phi_{n}\phi_{n-\delta_j}+\sum_{j=1}^{d}\sum_{n\in\Z^{d}}a_{n}a_{n-\delta_j}\phi_{n-\delta_j}^{2}+\sum_{j=1}^{d}\sum_{n\in\Z^{d}}a_{n}a_{n-\delta_j}\phi_{n}^{2}\\
 &=\sum_{j=1}^{d}\sum_{n\in\Z^{d}}a_{n}a_{n-\delta_{j}}(\phi_{n}-\phi_{n-\delta_j})^{2}\geq0.
\end{align*}

iii) 
Using the above computation with $\psi^{N}:=a\phi^{N}$, where 
\[
 \phi_{n}^{N}:=\begin{cases}
  1& \quad\mbox{ if }\; n\in[-N,N]^{d},\\
  0& \quad\mbox{ if }\; n\notin[-N,N]^{d},
 \end{cases} 
\]
one finds that
\[
\langle \psi^{N},H_{V}\psi^{N}\rangle=\sum_{j=1}^{d}\left(\sum_{\substack{n\in[-N,N]^{d} \\ n_j=N+1}}a_{n}a_{n-\delta_{j}}+\sum_{\substack{n\in[-N,N]^{d} \\ n_j=-N}}a_{n}a_{n-\delta_{j}}\right)
\]
for any $N\in\N$.  Taking definition~\eqref{eq:def_a_gamma} into account, the right hand side can be further simplified getting
\begin{align*}
\langle \psi^{N},H_{V}\psi^{N}\rangle&= 2d\sum_{n_2=-N}^{N}\dots\sum_{n_d=-N}^{N}a_{(N+1,n_2,\dots,n_d)}a_{(N,n_2,\dots,n_d)}\\
&\leq 2d\sum_{n_2=-N}^{N}\dots\sum_{n_d=-N}^{N}a_{(N,n_2,\dots,n_d)}^{2}\\
&=\frac{2d}{N^{2\gamma-d+1}}\sum_{n_2=-N}^{N}\dots\sum_{n_d=-N}^{N}\frac{1}{N^{d-1}}\left[1+\left(\frac{n_2}{N}\right)^{2}+\dots+\left(\frac{n_d}{N}\right)^{2}\right]^{-\gamma}.
\end{align*}
Notice the last multi-sum, which is to be interpreted as $1$ if $d=1$, is the~Riemann sum for a~multi-variable function. Consequently, as $N\to\infty$, we have the finite limit
\[
\sum_{n_2=-N}^{N}\dots\sum_{n_d=-N}^{N}\frac{1}{N^{d-1}}\left[1+\left(\frac{n_2}{N}\right)^{2}+\dots+\left(\frac{n_d}{N}\right)^{2}\right]^{-\gamma}\to\int_{-1}^{1}\dots\int_{-1}^{1}\frac{\dd x_{2}\dots\dd x_{d}}{(1+x_{2}^{2}+\dots+x_{d}^{2})^{\gamma}}.
\]
Therefore we may conclude that there exists a constant ${C}>0$ such that 
\[
\langle \psi^{N},H_{V}\psi^{N}\rangle\leq \frac{{C}}{N^{2\gamma-d+1}}
\]
for all $N\in\N$.

Now suppose that $2\gamma>d-1$ and $W\geq0$ is a compact potential such that $H_{V}\geq W$ in the sense of quadratic forms. We show that $W=0$ which implies claim~(iii). For all $N\in\N$, we have
\[
\sum_{n\in[-N,N]^{d}}a_{n}^{2}W_{n}=\langle \psi^{N},W\psi^{N}\rangle\leq\langle \psi^{N},H_{V}\psi^{N}\rangle\leq \frac{{C}}{N^{2\gamma-d+1}}.
\]
Taking the limit $N\to\infty$, we find that
\[
\sum_{n\in\Z^{d}}a_{n}^{2}W_{n}\leq0.
\]
Since $W_{n}\geq0$ and $a_{n}>0$ for all $n\in\Z^{d}$, the last inequality implies that $W_{n}=0$ for all $n\in\Z^{d}$, i.e. $W=0$.

iv) 
Suppose $\gamma>d/2$. Then $a\in\ell^{2}(\Z^{d})$ and it follows readily from definition~\eqref{eq:def_V_gamma_a} that $H_{V}a=0$. 
Alternatively, one deduces the same result from~\eqref{eq:V_n_asympt} and Theorem~\ref{thm:existence_cond} with $s=0$.

If $0<\gamma<d/2$, then a comparison of~\eqref{eq:V_n_asympt} with the condition~\eqref{eq:V_cond_ineq_absece} of Theorem~\ref{thm:absence_cond}
for $s=0$ yields that $H_{V}$ does not have a~zero energy ground state. Recalling that $H_{V}\geq0$, it follows that $0\notin\sigma_{\p}(H_{V})$.
If $\gamma=d/2$, one proceeds similarly using the refined condition~\eqref{eq:V_cond_ineq_absece} of Theorem~\ref{thm:absence_cond} with $s=1$. 
\end{proof}

\section*{Acknowledgment}
M.~J. received financial support from the Ministry of Education, Youth and Sport of the Czech Republic under the Grants No. RVO 14000. M.~J. is grateful for financial support from ”Centre for Advanced Applied Sciences”, Registry No. CZ.02.1.01/0.0/0.0/16 019/0000778, supported by the Operational Programme Research, Development and Education, co-financed by the European Structural and Investment Funds. F.~{\v S}. acknowledges the support of the EXPRO grant No.~20-17749X of the Czech Science Foundation.

\bibliographystyle{acm}

\begin{thebibliography}{10}

\bibitem{agm_85}
{\sc Agmon, S.}
\newblock Bounds on exponential decay of eigenfunctions of {S}chr\"{o}dinger
  operators.
\newblock In {\em Schr\"{o}dinger operators ({C}omo, 1984)}, vol.~1159 of {\em
  Lecture Notes in Math.} Springer, Berlin, 1985, pp.~1--38.

\bibitem{BenYar90}
{\sc Benguria, R.~D., and Yarur, C.}
\newblock Sharp condition on the decay of the potential for the absence of a
  zero-energy ground state of the {S}chr\"{o}dinger equation.
\newblock {\em J. Phys. A 23}, 9 (1990), 1513--1518.

\bibitem{bol85}
{\sc Boll{\'e}, D., Gesztesy, F., and Schweiger, W.}
\newblock Scattering theory for long-range systems at threshold.
\newblock {\em J. Math. Phys. 26}, 7 (1985), 1661--1674.

\bibitem{damanik2003bound}
{\sc Damanik, D., Hundertmark, D., and Simon, B.}
\newblock Bound states and the {S}zeg{\H{o}} condition for {J}acobi matrices
  and schr{\"o}dinger operators.
\newblock {\em Journal of Functional Analysis 205}, 2 (2003), 357--379.

\bibitem{dam05}
{\sc Damanik, D., Killip, R., and Simon, B.}
\newblock Schr{\"o}dinger operators with few bound states.
\newblock {\em Communications in mathematical physics 258}, 3 (2005), 741--750.

\bibitem{den07}
{\sc Denisov, S.~A., and Kiselev, A.}
\newblock Spectral properties of {S}chr{\"o}dinger operators with decaying
  potentials.
\newblock In {\em Proceedings of Symposia in Pure Mathematics\/} (2007),
  vol.~76, Providence, RI; American Mathematical Society; 1998, p.~565.

\bibitem{gol_ieot21}
{\sc Golinskii, L.}
\newblock Volterra-type discrete integral equations and spectra of
  non-self-adjoint {J}acobi operators.
\newblock {\em Integral Equations Operator Theory 93}, 6 (2021), Paper No. 63,
  12.

\bibitem{GriGar07}
{\sc Gridnev, D.~K., and Garcia, M.~E.}
\newblock Rigorous conditions for the existence of bound states at the
  threshold in the two-particle case.
\newblock {\em J. Phys. A 40}, 30 (2007), 9003--9016.

\bibitem{hoa-hun-ric-vug_ahp23}
{\sc Hoang, V., Hundertmark, D., Richter, J., and Vugalter, S.}
\newblock Quantitative bounds versus existence of weakly coupled bound states
  for {S}chr\"{o}dinger type operators.
\newblock {\em Ann. Henri Poincar\'{e} 24}, 3 (2023), 783--842.

\bibitem{Hof84}
{\sc Hoffmann-Ostenhof, M., and Hoffmann-Ostenhof, T.}
\newblock Absence of an {$L^2$}-eigenfunction at the bottom of the spectrum of
  the {H}amiltonian of the hydrogen negative ion in the triplet {$S$}-sector.
\newblock {\em J. Phys. A 17}, 17 (1984), 3321--3325.

\bibitem{HofOstHofOstSim83}
{\sc Hoffmann-Ostenhof, M., Hoffmann-Ostenhof, T., and Simon, B.}
\newblock A multiparticle {C}oulomb system with bound state at threshold.
\newblock {\em J. Phys. A 16}, 6 (1983), 1125--1131.

\bibitem{hjl23}
{\sc Hundertmark, D., Jex, M., and Lange, M.}
\newblock Quantum systems at the brink: existence of bound states, critical
  potentials, and dimensionality.
\newblock {\em Forum of Mathematics, Sigma 11\/} (2023), e61.

\bibitem{Jen80}
{\sc Jensen, A.}
\newblock Spectral properties of {S}chr\"{o}dinger operators and time-decay of
  the wave functions results in {$L^{2}({\bf R}^{m})$}, {$m\geq 5$}.
\newblock {\em Duke Math. J. 47}, 1 (1980), 57--80.

\bibitem{Jen84}
{\sc Jensen, A.}
\newblock Spectral properties of {S}chr\"{o}dinger operators and time-decay of
  the wave functions. {R}esults in {$L^{2}({\bf R}^{4})$}.
\newblock {\em J. Math. Anal. Appl. 101}, 2 (1984), 397--422.

\bibitem{JeKa79}
{\sc Jensen, A., and Kato, T.}
\newblock Spectral properties of {S}chr{\"o}dinger operators and time-decay of
  the wave functions.
\newblock {\em Duke Math. J. 46}, 3 (1979), 583--611.

\bibitem{kap-lap_16}
{\sc Kapitanski, L., and Laptev, A.}
\newblock On continuous and discrete {H}ardy inequalities.
\newblock {\em J. Spectr. Theory 6}, 4 (2016), 837--858.

\bibitem{kel-pin-pog_cmp18}
{\sc Keller, M., Pinchover, Y., and Pogorzelski, F.}
\newblock Optimal {H}ardy inequalities for {S}chr\"{o}dinger operators on
  graphs.
\newblock {\em Comm. Math. Phys. 358}, 2 (2018), 767--790.

\bibitem{kel-pin-pog_jst20}
{\sc Keller, M., Pinchover, Y., and Pogorzelski, F.}
\newblock Criticality theory for {S}chr\"{o}dinger operators on graphs.
\newblock {\em J. Spectr. Theory 10}, 1 (2020), 73--114.

\bibitem{kho-lak-alm_jmaa21}
{\sc Kholmatov, S.~Y., Lakaev, S.~N., and Almuratov, F.~M.}
\newblock Bound states of {S}chr\"{o}dinger-type operators on one and two
  dimensional lattices.
\newblock {\em J. Math. Anal. Appl. 503}, 1 (2021), Paper No. 125280, 33.

\bibitem{kre-lap-sta_blms22}
{\sc Krej\v{c}i\v{r}\'{\i}k, D., Laptev, A., and \v{S}tampach, F.}
\newblock Spectral enclosures and stability for non-self-adjoint discrete
  {S}chr\"{o}dinger operators on the half-line.
\newblock {\em Bull. Lond. Math. Soc. 54}, 6 (2022), 2379--2403.

\bibitem{kru12}
{\sc Kr{\"u}ger, H.}
\newblock On the existence of embedded eigenvalues.
\newblock {\em Journal of Mathematical Analysis and Applications 395}, 2
  (2012), 776--787.

\bibitem{liu21}
{\sc Liu, W.}
\newblock Criteria for embedded eigenvalues for discrete {S}chr{\"o}dinger
  operators.
\newblock {\em International Mathematics Research Notices 2021}, 20 (2021),
  15803--15832.

\bibitem{luef2004finiteness}
{\sc Luef, F., and Teschl, G.}
\newblock On the finiteness of the number of eigenvalues of {J}acobi operators
  below the essential spectrum.
\newblock {\em Journal of Difference Equations and Applications 10}, 3 (2004),
  299--307.

\bibitem{naiman1959set}
{\sc Naiman, P.}
\newblock The set of isolated points of increase of the spectral function
  pertaining to a limit-constant jacobi matrix.
\newblock {\em Izvestiya Vysshikh Uchebnykh Zavedenii. Matematika}, 1 (1959),
  129--135.

\bibitem{reed-simon-vol4}
{\sc Reed, M., and Simon, B.}
\newblock {\em Methods of modern mathematical physics. {IV}. {A}nalysis of
  operators}.
\newblock Academic Press [Harcourt Brace Jovanovich, Publishers], New
  York-London, 1978.

\bibitem{rem07}
{\sc Remling, C.}
\newblock Discrete and embedded eigenvalues for one-dimensional
  {S}chr{\"o}dinger operators.
\newblock {\em Communications in mathematical physics 271\/} (2007), 275--287.

\bibitem{roz-sol_09}
{\sc Rozenblum, G., and Solomyak, M.}
\newblock On the spectral estimates for the {S}chr\"{o}dinger operator on
  {$\Bbb Z^d,\ d\ge3$}.
\newblock vol.~159. 2009, pp.~241--263.
\newblock Problems in mathematical analysis. No. 41.

\bibitem{vonneu93}
{\sc von Neumann, J., and Wigner, E.~P.}
\newblock {\"U}ber merkw{\"u}rdige diskrete eigenwerte.
\newblock {\em The Collected Works of Eugene Paul Wigner: Part A: The
  Scientific Papers\/} (1993), 291--293.

\bibitem{Yaf83}
{\sc Yafaev, D.~R.}
\newblock Scattering subspaces and asymptotic completeness for the
  nonstationary {S}chr\"{o}dinger equation.
\newblock {\em Mat. Sb. (N.S.) 118(160)}, 2 (1982), 262--279.
\end{thebibliography}

\end{document}